\newcommand\dbarD{%
\tikz[baseline=(nodeAnchor.base)]{
    \node[inner sep=0] (nodeAnchor) {$D$}; 
    \draw[line width=0.1ex,line cap=round] 
        ($(nodeAnchor.north west)+(0.0em,0.25ex)$) 
            --
        ($(nodeAnchor.north east)+(0.0em,0.25ex)$) 
        ($(nodeAnchor.north west)+(0.0em,0.55ex)$) 
            --
        ($(nodeAnchor.north east)+(0.0em,0.55ex)$) 
    ;
}}
\newtheorem{theorem}{Theorem}
\newtheorem{remark}{Remark}
\definecolor{labelkey}{rgb}{0.6,0,1}
\def\edges{{\mathcal E}}
\newcounter{bla}
\journal{Journal of computational Physics}
\begin{document}

\begin{frontmatter}




\title{A cost-effective nonlinear extremum-preserving finite volume scheme for highly anisotropic diffusion on Cartesian grids, with application to radiation belt dynamics.}



\author[a]{Nour Dahmen\corref{author}}
\author[b]{J\'er\^ome Droniou}
\author[c]{François Rogier}

\cortext[author] {Corresponding author.\\\textit{E-mail address:} nourallah.dahmen@onera.fr}
\address[a]{ONERA/DPHY, Université de Toulouse, Toulouse, France}
\address[b]{School of Mathematics, Monash University, Clayton, Australia}
\address[c]{ONERA/DTIS, Université de Toulouse, Toulouse, France}

\begin{abstract}
We construct a new nonlinear finite volume (FV) scheme for highly anisotropic diffusion equations, that satisfies the discrete minimum-maximum principle. The construction relies on the linearized scheme satisfying less restrictive monotonicity conditions than those of an $M$-matrix, based on a weakly regular matrix splitting and using the Cartesian structure of the mesh (extension to quadrilateral meshes is also possible). The resulting scheme, obtained by expressing fluxes as nonlinear combinations of linear fluxes, has a larger stencil than other nonlinear positivity preserving or minimum-maximum principle preserving schemes. Its larger ``linearized'' stencil, closer to the actual complete stencil (that includes unknowns appearing in the convex combination coefficients), enables a faster convergence of the Picard iterations used to compute the solution of the scheme. Steady state dimensionless numerical tests as well as simulations of the highly anisotropic diffusion in electron radiation belts show a second order of convergence of the new scheme and confirm its computational efficiency compared to usual nonlinear FV schemes.
\end{abstract}

\begin{keyword}
Radiation belts; Finite volume method; Anisotropic diffusion equation; Monotonicity; Discrete maximum principle.
\end{keyword}

\end{frontmatter}
\section{Introduction}

Finite volume methods have been favored for a long time in engineering communities for solving diffusion equations resulting from fluid flows. Unlike finite difference methods, they are specifically designed to provide a local conservativity property to the numerical resolution, fundamental to conservation laws \cite{eymard2000}. However, important numerical properties related to accuracy, computational efficiency and physical relevance are also required to ensure a reliable numerical solution \cite{droniou2014}.

One of the most sought after properties is monotonicity, which embodies (for linear schemes) the respect of the discrete minimum-maximum principle (MMP) and the preservation of the positivity of the numerical solution \cite{droniou2014}. Monotonicity is obtained by ensuring a monotonic discretization matrix. However, this property is hardly achieved by linear FV schemes for any mesh type or in case of highly anisotropic diffusion problems (with anisotropy ratios $>10^6$), typically encountered in applications like reservoir engineering, radionuclide migration or radiation belt dynamics. For the last application precisely, the numerical constraints are sometimes so high that an incomplete diffusive frame is considered to prevent negative values and spurious non-physical oscillations from occurring \cite{varotsou2008,dahmen2020a,dahmen2020b}.

On quadrilateral grids, linear schemes with 9-point stencils cannot be monotonic in case of highly anisotropic diffusion tensor \cite{KER81,BC05}; recovering monotonicity requires the usage of sometimes unreasonably large stencils \cite{LP09} (which comes with a computational cost). To preserve local stencils and the monotonicity of the scheme, it is necessary to abandon the principle of linearity of the scheme and adopt a flexible nonlinear ``monotonic'' construction based on a convex combination of linear upwind fluxes \cite{droniou2014,schneider2018}, inspired by \cite{BER05}. The convex combination weights are properly chosen to either ensure extremum principle preserving properties, or at least non negative numerical solutions \cite{DLP10,sheng2011,sheng20,LEP10,yuan2008,LEP09,CAN13,LIP09-II,LIP12,LIP07}. This usually amounts to ensuring that a linearized version of the scheme has a so-called $M$-matrix structure, which satisfies the following properties:
\begin{itemize}[itemsep=0pt]
    \item Irreducible with strict positive diagonal entries,
    \item Non negative diagonal entries ($Z$-matrix),
    \item Weak diagonal dominance (strict for at least one row).
\end{itemize}
However, turning to nonlinear schemes impose a trade-off as the positivity preserving principle and the MMP are no longer equivalent. Some issues can also arise when needing to eliminate intermediate unknowns in the construction, several options being available but not all of them leading to preserving the monotonicity of the scheme \cite{sheng20,agelas09,sheng2011,sheng18}. Besides, nonlinear resolution introduces an additional computational cost that can quickly become substantial, especially in the context of radiation belt diffusion, as reported in recent studies \cite{dahmen2020a}.

Starting from the general quest of designing robust numerical schemes for highly anisotropic and inhomogeneous diffusion equations, and the specific considerations imposed by radiation belts dynamics that imperatively requires physically relevant solutions and optimal resolution methods, we design a new nonlinear FV full-monotonicity preserving scheme on non-uniform Cartesian grids (as used in radiation belts simulations \cite{varotsou2008,subbotin2009,su2010}). The main innovation of our design, compared to usual nonlinear FV scheme, is the usage of convex combinations of upwind linear fluxes with a larger  stencil. Precisely, the latter matches in its ``linearized form'' the complete nonlinear stencil accounting for the unknowns appearing in the convex combination coefficients. As a consequence, this feature leads to an important reduction of the number of nonlinear iterations required to compute the scheme's solution. The usage of a larger stencil prevents the linearized schemes to have an $M$-matrix. The monotonicity of our improved method is however preserved using the conditions described in \cite{nordbotten2007}, which are less restrictive than the conditions imposed by an $M$-matrix construction, and are based on a weakly regular splitting of the discretization matrix. Thanks to this construction, the new scheme preserves both positivity and the MMP, and numerically achieves a second order rate of convergence.

The outline of this paper is as follows. In section \ref{sec:current}, we describe the studied diffusion equation, the FV mesh and the construction of typical nonlinear FV schemes found in the literature. The construction of the new scheme, called Relaxed NonLinear MultiPoint Flux Approximation (R-NLMPFA) due to its usage of relaxed monotonicity conditions, is the purpose of Section \ref{sec:scheme}, in which we also recall the alternative monotonicity conditions underpinning it. In Section \ref{sec:tests}, numerical results are presented on steady state dimensionless cases, assessing the numerical properties of the new scheme compared to typical nonlinear schemes, with a special attention paid to the computational cost. In section \ref{sec:radbelt}, we succinctly present our application context of electron radiation belt modelling. The latter relies on a highly inhomogenous, anisotropic diffusion equation inducing huge numerical constraints on finite difference based codes. Then, the R-NLMPFA is tested over a practical case of electron radiation belt simulation to confirm its relevance. Finally, we summarize our work in section \ref{sec:conclusion} and give some conclusions. 

\section{Nonlinear finite volume schemes for anisotropic diffusion equations}\label{sec:current}

\subsection{Diffusion equation and finite volume construction on a Cartesian mesh}
We consider the following general steady state diffusion problem
\begin{equation}
\left\{
    \begin{array}{ll}
        \nabla \cdot (\dbarD\nabla f)+ S = 0 & \mbox{in } \Omega, \\
        f=\overline{f} &\mbox{on } \partial \Omega, \\
    \end{array}
\right.
\label{eq_diffusion}
\end{equation}
with $\Omega \subset \mathbb{R}^2$ a rectangular domain (extension to 3D is possible) and $\partial \Omega $ its boundary on which Dirichlet boundary conditions are imposed. $\dbarD:\Omega\rightarrow M_2(\mathbb{R})$ is the diffusion tensor (uniformly symmetric definite positive and bounded matrix-valued) and $S \in L^2(\Omega)$ is the source term.

The finite volume framework is based on the local balance of fluxes, obtained after integrating the PDE in \eqref{eq_diffusion} over an elementary volume $K\subset \Omega$, and introducing the flux $\overline{F}_{K,\sigma}$ crossing through each edge $ \sigma\in\partial K $, see \cite{eymard2000}:
\begin{equation}
\int_{K} S dV =-\int_{\partial K} \dbarD\nabla f \cdot \boldsymbol n_K dl  = \sum_{\sigma\in\partial K} \overline{F}_{K,\sigma}
\label{eq_balance}
\end{equation}
with
\begin{equation}
\overline{F}_{K,\sigma}=-\int_{\sigma} \dbarD\nabla f \cdot \boldsymbol n_{K,\sigma} dl.
\label{eq_flux}
\end{equation}
Here, $\boldsymbol n_K$ is the outer normal vector to $\partial K$, and $\boldsymbol n_{K,\sigma}$ is the normal vector to the edge $\sigma$ pointing outside $K$.

In FV methods \cite{droniou2014}, $\overline{F}_{K,\sigma}$ is approximated by a consistent expression $F_{K,\sigma}$ on a set $T$ of disjoint mesh cells covering $\Omega$, that is $\overline{\Omega}=\sum_{K\in T}\overline{K}$. Each cell $K\in T$ is is equipped with a point $c_K$ of coordinates $(x_K,y_K)$ (often referred to as its ``center'', although it does not have to be the center of mass) and associated to a set $\edges_K$ of edges such that $\partial K=\sum_{\sigma \in \edges_{K}} \overline{\sigma}$; we denote by $|\sigma|$ the measure of $\sigma$. $\edges_{\rm ext}$ is the set of exterior edges such that $\partial \Omega=\sum_{\sigma \in \edges_{\rm ext}} \overline{\sigma}$, $S_K$ denotes the approximation of $S$ on the cell $K$, $f_{\sigma}$ is the approximation of $f$ on the edge $\sigma$ and $\dbarD(K)$ the diffusion tensor evaluated in $K$ (this evaluation could be at the center of the cell, or the average over the cell).

For $K$ and $L$, two neighbouring cells in the mesh $T$ and sharing the edge $\sigma$, FV methods impose by definition (see \eqref{eq_flux}) the conservativity property
\begin{equation}
F_{K,\sigma}+F_{L,\sigma}=0.
\label{eq_conserv}
\end{equation}
The second key ingredient in the design of a FV method is to impose at the discrete level the flux balance \eqref{eq_balance}, that is:
for all cell $K$,
\[
\sum_{\sigma\in \edges_K}F_{K,\sigma}=S_K.
\]

As this research work is primarily related to the electron radiation belt numerical modelling application, we consider from now on a Cartesian grid case, i.e., a rectangular and conforming but possibly nonuniform mesh as represented in Figure \ref{fig_mesh}. In fact, this grid type is used in the most recent application of the finite volume method in the electron radiation belt context within ONERA's Salammb\^o code  \cite{varotsou2008,dahmen2020a,dahmen2020b,bourdarie2012}; it is also currently used in the finite-difference based codes of the radiation belts community \cite{subbotin2009,su2010,glauert2014}, so designing a FV method specifically adapted to such a grid can ease transition for said community. As a consequence of the choice of a Cartesian grid, we can easily adjust the cell centers so that lines joining them are parallel to the axes.

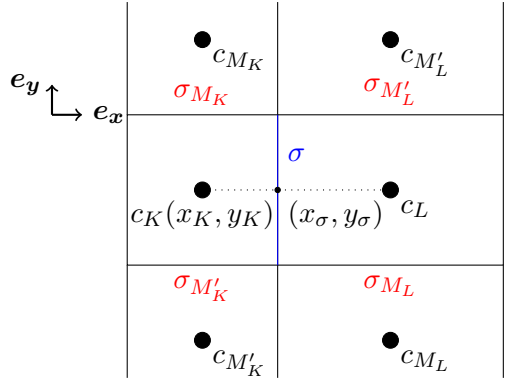
\begin{figure}[ht!]
    \centering
    \begin{tikzpicture}
    \draw (0,0) -- (2,0) -- (2,2) -- (0,2) -- cycle;
    \draw (2,0) -- (5,0) -- (5,2) -- (2,2);
    \draw (0,0) -- (0,-1.5);
    \draw (2,0) -- (2,-1.5);
    \draw (5,0) -- (5,-1.5);
    \draw (0,2) -- (0,3.5);
    \draw (2,2) -- (2,3.5);
    \draw (5,2) -- (5,3.5);
    \draw[blue] (2,0) -- (2,2);
    \draw[dotted] (1,1) -- (3.5,1);
    \filldraw [black] (1,1) circle (3pt) node[anchor=north] {$c_K(x_K,y_K)$};
    \filldraw [black] (3.5,1) circle (3pt) node[anchor=north west] {$c_L$};
    \filldraw [black] (1,3) circle (3pt) node[anchor=north west] {$c_{M_K}$};
    \filldraw [black] (3.5,3) circle (3pt) node[anchor=north west] {$c_{M'_L}$};
    \filldraw [black] (1,-1) circle (3pt) node[anchor=north west] {$c_{M'_K}$};
    \filldraw [black] (3.5,-1) circle (3pt) node[anchor=north west] {$c_{M_L}$};
    \filldraw [blue] (2,1.25) circle (0pt) node[anchor=south west] {$\sigma$};
    \filldraw [black] (2,1) circle (1pt) node[anchor=north west] {$(x_{\sigma},y_{\sigma})$};
    \filldraw [red] (1,2) circle (0pt) node[anchor=south] {$\sigma_{M_K}$};
    \filldraw [red] (1,0) circle (0pt) node[anchor=north] {$\sigma_{M'_K}$};
    \filldraw [red] (3.5,2) circle (0pt) node[anchor=south] {$\sigma_{M'_L}$};
    \filldraw [red] (3.5,0) circle (0pt) node[anchor=north] {$\sigma_{M_L}$};

    \draw[thick,->] (-1,2) -- (-0.6,2) node[anchor= west] {$\boldsymbol{e_x}$};\draw[thick,->] (-1,2) -- (-1,2.4) node[anchor= east] {$\boldsymbol{e_y}$};
    \end{tikzpicture}
    \caption{Cartesian mesh notations}
    \label{fig_mesh}
\end{figure}

\subsection{Design of the numerical fluxes in typical nonlinear FV schemes}
\label{sec:typical}

Nonlinear FV schemes are based on expressing the discrete flux $F_{K,\sigma}$ as a convex combination of two discrete upwind linear fluxes $F_1$ and $F_2$
\begin{equation}
F_{K,\sigma}=\mu_1 F_1+\mu_2 F_2.
\label{eq_pond}
\end{equation}
The weights $\mu_1$ and $\mu_2$ of the convex combination are such as $\mu_1,\mu_2\ge0$ and $\mu_1+\mu_2=1$; their choice is made to either ensure positivity (commonly called monotonic scheme) or the MMP (extremum principle preserving scheme).

$F_1$ and $F_2$ are associated respectively to $K$ and $L$; their expressions are based on equation \eqref{eq_flux} and defined inside their corresponding cell. Since $\dbarD\nabla f\cdot \boldsymbol{n}= \dbarD\boldsymbol{n}\cdot \nabla f$ (as $\dbarD$ is symmetric), this yields the following approximations
\begin{equation}
F_1=-\dbarD(K)\boldsymbol{n}_{K,\sigma}\cdot (\nabla f)_{K}|\sigma|\quad\mbox{ and }\quad
F_2=-\dbarD(L)\boldsymbol{n}_{L,\sigma}\cdot (\nabla f)_{L}|\sigma|.
\label{eq_flux_discret}   
\end{equation}
Recalling that we use a Cartesian grid, consider the case $\boldsymbol{n}_{K,\sigma}=\boldsymbol{e_x}$ and the diffusion tensor expressed as
\begin{equation}
\dbarD=
    \begin{pmatrix}
    D_{xx}&D_{xy}\\
    D_{xy}&D_{yy}\\
    \end{pmatrix}.
\end{equation}
Thus
\begin{equation}
    \begin{aligned}
    \dbarD(K)\boldsymbol{n}_{K,\sigma}=\dbarD(K)\boldsymbol{e_x}=D_{xx}(K)\boldsymbol{e_x}+D_{xy}(K) \boldsymbol{e_y},\\
    \dbarD(L)\boldsymbol{n}_{L,\sigma}=-\dbarD(L)\boldsymbol{e_x}=-D_{xx}(L)\boldsymbol{e_x}-D_{xy}(L)\boldsymbol{e_y}.
\end{aligned}
\label{eq_Dn}
\end{equation}
We aim to define $(\nabla f)_{K}$ (respectively $(\nabla f)_{L}$) according to the direction of the vector $\dbarD\boldsymbol{n}_{K,\sigma}$ (respectively $\dbarD\boldsymbol{n}_{L,\sigma}$) and using only the unknowns from cells neighbouring $K$ and whose closure touches $\sigma$, that is $f_K, f_\sigma,f_{\sigma_{M_K}},f_{\sigma_{M'_K}}$, following the notations in Figure \ref{fig_mesh} (respectively $f_L, f_\sigma,f_{M_L},f_{M'_L}$ for the cell $L$). The approximations of $\nabla f$ in each cell are ``upwind'' (thus leading to what we referred to as ``upwind fluxes'') in the sense that they are adapted to the directions of $\dbarD\boldsymbol{n}$ in each cell (see \eqref{eq_Dn}) and to the sign of $D_{xy}$, with an appropriate choice of transverse edge unknowns. The expressions in \eqref{eq_flux_discret} become:
\begin{equation}
    \begin{gathered}
    F_1=\left\{\begin{array}{ll}
    \displaystyle-\left(D_{xx}(K)\frac{f_{\sigma}-f_{K}}{|x_\sigma-x_K|}+D_{xy}(K)\frac{f_{\sigma_{M_K}}-f_{K}}{|y_{\sigma_{M_K}}-y_K|}\right)|\sigma|&\quad\mbox{ if }D_{xy}(K)\geq 0,\\[1.2em]
    \displaystyle-\left(D_{xx}(K)\frac{f_{\sigma}-f_{K}}{|x_\sigma-x_K|}+(-D_{xy}(K))\frac{f_{\sigma_{M'_K}}-f_{K}}{|y_{\sigma_{M'_K}}-y_{K}|}\right)|\sigma|&\quad\mbox{ if } D_{xy}(K)< 0,
    \end{array}\right.\\
    F_2=\left\{\begin{array}{ll}
    \displaystyle-\left(-D_{xx}(L)\frac{f_{\sigma}-f_{L}}{|x_\sigma-x_L|}-D_{xy}(L)\frac{f_{\sigma_{M_L}}-f_{L}}{|y_{\sigma_{M_L}}-y_L|}\right)|\sigma|&\quad\mbox{ if }D_{xy}(L)\geq 0,\\[1.2em]
    \displaystyle-\left(-D_{xx}(L)\frac{f_{\sigma}-f_{L}}{|x_\sigma-x_K|}-(-D_{xy}(L))\frac{f_{\sigma_{M'_L}}-f_{L}}{|y_{\sigma_{M'_L}}-y_{L}|}\right)|\sigma|&\quad\mbox{ if } D_{xy}(L)< 0,
    \end{array}\right.
    \end{gathered}
    \label{eq_F1_F2_edge}
\end{equation}
where, for any edge $\sigma$, $(x_\sigma,y_\sigma)$ are the coordinates of the intersection of $\sigma$ with the line (orthogonal to $\sigma$) joining the centers of the cells on each side of $\sigma$ (see Figure \ref{fig_mesh}).

The approximations of $\nabla f$ can also be recast with cell center unknowns, which yields the following alternative forms of $F_1$ and $F_2$:
\begin{equation}
    \begin{gathered}
    F_1=\left\{\begin{array}{ll}
    \displaystyle-\left(D_{xx}(K)\frac{f_{L}-f_{K}}{|x_L-x_K|}+D_{xy}(L)\frac{f_{M_K}-f_{K}}{|y_{M_K}-y_K|}\right)|\sigma|&\quad\mbox{ if }D_{xy}(K)\geq 0,\\[1.2em]
    \displaystyle-\left(D_{xx}(K)\frac{f_{L}-f_{K}}{|x_L-x_K|}+(-D_{xy}(K))\frac{f_{M_K'}-f_{K}}{|y_{M_K'}-y_{K}|}\right)|\sigma|&\quad\mbox{ if } D_{xy}(K)< 0,
    \end{array}\right.\\
    F_2=\left\{\begin{array}{ll}
    \displaystyle-\left(-D_{xx}(L)\frac{f_{K}-f_{L}}{|x_L-x_K|}-D_{xy}(L)\frac{f_{M_L}-f_{L}}{|y_{M_L}-y_L|}\right)|\sigma|&\quad\mbox{ if }D_{xy}(L)\geq 0,\\[1.2em]
    \displaystyle-\left(-D_{xx}(L)\frac{f_{K}-f_{L}}{|x_L-x_K|}-(-D_{xy}(L))\frac{f_{M_L'}-f_{L}}{|y_{M_L'}-y_{L}|}\right)|\sigma|&\quad\mbox{ if } D_{xy}(L)< 0.
    \end{array}\right.
    \end{gathered}
    \label{eq_F1_F2_cell_centered}
\end{equation}
A first scheme that can be constructed using the flux-weighing approach is the NonLinear Two-Point Flux approximation scheme (NLTPFA) \cite{droniou2014,schneider2018,yuan2008,lepotier2005}. This scheme achieves with the expressions \eqref{eq_pond} and \eqref{eq_F1_F2_edge} a two-point flux structure
\begin{equation}
    F_{K,\sigma}=\alpha_{K,L}(f)f_{K}-\beta_{K,L}(f)f_{L},
    \label{eq_NLTPFA}
\end{equation}
with $\alpha_{K,L}(f),\,\beta_{K,L}(f)\geq 0$, and $\alpha_{K,L}(f)=\beta_{L,K}(f)$ to ensure \eqref{eq_conserv}. The two-point structure is obtained through a special choice of $\mu_1$ and $\mu_2$ that eliminates the edge unknowns (see \ref{sec:appenA}). It provides a discretization matrix (once the nonlinear coefficients $\alpha_{K,L}$ and $\beta_{K,L}$ are frozen) that has an $M$-matrix structure \cite{droniou2014}, which only ensures the positivity of the solution.

To obtain the full monotonicity, a Local Maximum Principle (LMP) structure is required \cite{DLP10}. In the context of nonlinear FV schemes, this is achieved with the NonLinear Multi-Point Flux Approximation scheme (NLMPFA) \cite{DLP10,droniou2014,schneider2018,sheng2011,cances2013}, whose fluxes are
\begin{equation}
    F_{K,\sigma}=\sum_{N}\tau_{K,N}(f_K-f_N),
    \label{eq_NLMPFA}
\end{equation}
with discretization weights $\tau_{K,N}\ge 0$ ($>0$ if $N$ is the cell on the other side of $\sigma$ from $K$), and the sum usually extended to the cells $N$ that share an edge or vertex with $K$. In order to obtain this cell-centered structure for $F_{K,\sigma}$, we use the cell-centered versions of $F_1$ and $F_2$ in \eqref{eq_F1_F2_cell_centered} and we follow the construction reported in \ref{sec:appenA}.

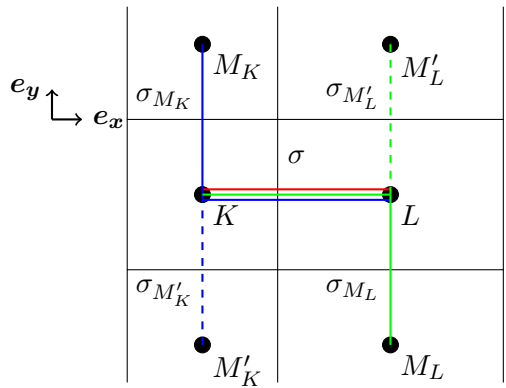
\begin{figure}[ht!]
    \centering
    \begin{tikzpicture}
    \draw (0,0) -- (2,0) -- (2,2) -- (0,2) -- cycle;
    \draw (2,0) -- (5,0) -- (5,2) -- (2,2);
    \draw (0,0) -- (0,-1.5);
    \draw (2,0) -- (2,-1.5);
    \draw (5,0) -- (5,-1.5);
    \draw (0,2) -- (0,3.5);
    \draw (2,2) -- (2,3.5);
    \draw (5,2) -- (5,3.5);
    \filldraw [black] (1,1) circle (3pt) node[anchor=north west] {$K$};
    \filldraw [black] (3.5,1) circle (3pt) node[anchor=north west] {$L$};
    \filldraw [black] (1,3) circle (3pt) node[anchor=north west] {$M_K$};
    \filldraw [black] (3.5,3) circle (3pt) node[anchor=north west] {$M'_L$};
    \filldraw [black] (1,-1) circle (3pt) node[anchor=north west] {$M'_K$};
    \filldraw [black] (3.5,-1) circle (3pt) node[anchor=north west] {$M_L$};
    \filldraw  (2,1.5) circle (0pt) node[anchor=west] {$\sigma$};
    \filldraw  (0.5,2) circle (0pt) node[anchor=south] {$\sigma_{M_K}$};
    \filldraw  (0.5,0) circle (0pt) node[anchor=north] {$\sigma_{M'_K}$};
    \filldraw (3,2) circle (0pt) node[anchor=south] {$\sigma_{M'_L}$};
    \filldraw  (3,0) circle (0pt) node[anchor=north] {$\sigma_{M_L}$};
    \draw[red, thick] (1,1.07)--(3.5,1.07);
    \draw[blue, thick] (1,0.93)--(3.5,0.93);
    \draw[blue, thick] (1,1)--(1,3);
    \draw[blue, thick, dashed] (1,1)--(1,-1);
    \draw[green, thick] (3.5,1)--(3.5,-1);
    \draw[green, thick] (1,1)--(3.5,1);
    \draw[green, thick, dashed] (3.5,1)--(3.5,3);
    \draw[thick,->] (-1,2) -- (-0.6,2) node[anchor= west] {$\boldsymbol{e_x}$};\draw[thick,->] (-1,2) -- (-1,2.4) node[anchor= east] {$\boldsymbol{e_y}$};
    \end{tikzpicture}
    \caption{Construction and stencil unknowns of the $F_{K,\sigma}$ flux discretization on a Cartesian grid, for to the NLTPFA scheme (in red) and for to the NLMPFA scheme (in blue: stencil of $F_1$, in green: stencil of $F_2$ -- solid line refers to the case $D_{xy}\geq 0$, dashed line refers to the case $D_{xy}<0$). Note that, in NLMPFA, $F_2$ is eliminated from the final expression of $F_{K,\sigma}$.}
    \label{fig_NLTPFA_NLMPFA}
\end{figure}
From both schemes, a nonlinear algebric system is obtained that can be written 
\begin{equation}\label{eq:linear_nonlinear}
    A(\boldsymbol{X})\boldsymbol{X}=\boldsymbol{B}
\end{equation}
with $A$ the discretization matrix of nonlinear scheme, $\boldsymbol{X}=(f_K)_{K\in T}$ the unknown vector and $\boldsymbol{B}$ the right-hand side vector containing the source terms $S_K$ and the boundary terms $\overline{f}_\sigma$. The previous equation is solved using the Picard algorithm (Algorithm \ref{algo:picard}), with $\epsilon$ representing the stopping criteria
\begin{algorithm}[!h]
\caption{Picard algorithm for solving \eqref{eq:linear_nonlinear} \label{algo:picard}}
\begin{algorithmic}[1]
\State Initialize $\boldsymbol{X}^0\geq \boldsymbol{0}$ and $s\leftarrow0$
\While{$\frac{||\boldsymbol{X}^{s+1}-\boldsymbol{X}^s||_{\infty}}{||\boldsymbol{X}^s||_{\infty}}\geq \epsilon$}
  \State Construct the linearized system $A(\boldsymbol{X}^s)\boldsymbol{X}=\boldsymbol{B}(\boldsymbol{X}^s)$ (after freezing the nonlinear terms of the scheme)
  \State Solve $A(\boldsymbol{X}^{s})\boldsymbol{X}^{s+1}=\boldsymbol{B}(\boldsymbol{X}^{s})$
    \State $s\leftarrow s+1$
\EndWhile
\end{algorithmic}
\end{algorithm}

\begin{remark}[Stencil of linearized NLTPFA and NLMPFA]\label{rem:stencil}
After freezing the nonlinear coefficients, both schemes have a linearized stencil of 5 points, which thus does not develop on all unknowns involved in the definitions of the linear fluxes $F_1$ and $F_2$ (which would lead to a 9-point stencil). In fact, the NLTPFA scheme considers only $K$ and $L$ as unknowns in each discretization direction. The NLMPFA scheme, on the other hand, considers only the cell center unknowns involved in $F_1$, that is $K$, $L$ and $M_K$ or $M'_K$ (see Figure \ref{fig_NLTPFA_NLMPFA}), while $F_2$ and its cell center unknowns are only used to estimate the convex combination weights during the nonlinear resolution phase and do not enter explicitly in the linearized form of $F_{K,\sigma}$ (see \ref{sec:appenA}).
\end{remark}

\section{Nonlinear monotonic FV scheme with improved stencil}\label{sec:scheme}

\subsection{Monotonicity conditions}

As previously presented, the monotonicity of nonlinear FV schemes is usually based on the $M$-matrix construction argument, imposing a constrained 5-point linearized stencil of discretization on both NLTPFA and NLMPFA schemes. The new scheme that we present here is based on the weaker monotonicity conditions of \cite{nordbotten2007}, derived from the weakly regular splitting of the discretization matrix \cite{berman1994}.

Based on this decomposition, \cite{nordbotten2007} establishes a set of conditions on the discretization matrix terms to ensure its monotonicity. Precisely, let us consider a discretization matrix of a nine-point locally conservative linear scheme, solving \eqref{eq_diffusion} on a two dimensional quadrilateral grid and that can be written as
\begin{equation}
    \sum_{k=1}^9m_k^{i,j}f_k=Q_{i,j}\mbox{ for all $(i,j)$}.
    \label{eq_m}
\end{equation}
Here, $m_{k}^{i,j}$ represents the discretization weight of the 9-point stencil scheme, associated to the neighbouring cell $k$ (as indexed in Figure \ref{fig_m}) of the grid cell $(i,j)$, and $Q_{i,j}$ is constructed from the source term and boundary conditions.

Assume a global ordering of cells starting from the lower left corner of the mesh grid ($i=1$,$j=1$) and running over each grid row from bottom to top. When reported on the discretization matrix $A$, the terms $m_k^{i,j}$ refers to the matrix elements disposed in the row of cell $(i,j)$ (so $m_1^{i,j}$ is the diagonal element, $m_2^{i,j}$ and $m_6^{i,j}$ the first secondary diagonal terms of rank $1$ and $-1$, etc.).

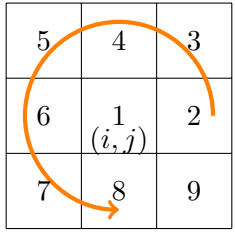
\begin{figure}[ht!]
    \centering
    \begin{tikzpicture}
    \draw[step=1cm,black, thin] (0,0) grid (3,3);
    \draw[ultra thick, ->, orange] (2.75,1.5) arc (0:270:1.25);
    \filldraw [black] (1.5,1.5) circle (0pt) node[anchor=center] {$1$};
    \filldraw [black] (1.5,1.5) circle (0pt) node[anchor=north] {$(i,j)$};
    \filldraw [black] (1.5,2.5) circle (0pt) node[anchor=center] {$4$};
    \filldraw [black] (2.5,1.5) circle (0pt) node[anchor=center] {$2$};
    \filldraw [black] (2.5,2.5) circle (0pt) node[anchor=center] {$3$};
    \filldraw [black] (0.5,1.5) circle (0pt) node[anchor=center] {$6$};
    \filldraw [black] (0.5,0.5) circle (0pt) node[anchor=center] {$7$};
    \filldraw [black] (0.5,2.5) circle (0pt) node[anchor=center] {$5$};
    \filldraw [black] (1.5,0.5) circle (0pt) node[anchor=center] {$8$};
    \filldraw [black] (2.5,0.5) circle (0pt) node[anchor=center] {$9$};
    \end{tikzpicture}
    \caption{Local cell numbering for a nine-point scheme on a Cartesian grid, adapted from \cite{nordbotten2007}.}
    \label{fig_m}
\end{figure}
The monotonicity conditions presented in \cite{nordbotten2007} consists in the following set of 10 inequalities imposed on $m_{k}^{i,j}$:
\begin{gather}
m_1^{i,j}>0, \tag{\textbf{A0}}\\
m_2^{i,j}<0,\tag{\textbf{A1a}}\\
m_{6}^{i,j}<0,\tag{\textbf{A1c}}\\
m_1^{i,j}+m_2^{i,j}+m_6^{i,j}>0,\tag{\textbf{A2}}\\
m_{4}^{i,j}<0,\tag{\textbf{A1b}}\\
m_8^{i,j}<0,\tag{\textbf{A1d}}\\
m_2^{i,j}m_4^{i,j-1}-m_3^{i,j-1}m_1^{i,j}>0,\tag{\textbf{A3a}}\\
m_6^{i,j}m_4^{i,j-1}-m_5^{i,j-1}m_1^{i,j}>0,\tag{\textbf{A3b}}\\
m_2^{i,j}m_8^{i,j+1}-m_9^{i,j+1}m_1^{i,j}>0,\tag{\textbf{A3c}}\\
m_2^{i,j}m_8^{i,j+1}-m_7^{i,j+1}m_1^{i,j}>0.\tag{\textbf{A3d}}
\end{gather}

By interpreting the terms $m_k^{i,j}$ as the elements of $A$, one can notice that:
\begin{itemize}
    \item Condition (\textbf{A0}) imposes a strictly positive main diagonal entry, similar as for an $M$-matrix.
    \item Conditions (\textbf{A1}) impose strictly negative entries only on 4 secondary diagonal entries, unlike the $M$-matrix that requires negative entries on all extra diagonal terms.
    \item Condition (\textbf{A2}) imposes a dominance of the main diagonal entries only on 2 extra-diagonal terms instead of a diagonal dominance by row imposed by the $M$-matrix structure.
    \item Conditions (\textbf{A3}) impose conditions on cross products involving other secondary diagonals.
\end{itemize}

Our idea is to design a nonlinear scheme whose linearized version satisfies these conditions; the scheme is based on the flux weighing method \eqref{eq_pond}, but these weaker conditions enable us to remove the 5-point stencil constraint of NLTPFA and NLMPFA on the linearized matrix.

\subsection{Construction of the new scheme}\label{sec:construction.scheme}

The new nonlinear FV scheme is based on the convex combination of linear fluxes taken in their cell-centered form \eqref{eq_F1_F2_cell_centered}, which can be put in the following general form on a Cartesian grid 
\begin{equation}
\begin{gathered}
    F_1=\lambda_1(f_K-f_L)+\nu_1(f_K-f_M),\\
    F_2=-\lambda_2(f_L-f_K)-\nu_2(f_L-f_N).
    \label{eq_F1_F2_lambda}
\end{gathered}
\end{equation}
Here, $\lambda_1,\lambda_2,\nu_1,\nu_2>0$, $M$ and $N$ refer to the neighbouring cells of $K$ and $L$ chosen accordingly to the signs of $D_{xy}(K)$ and $D_{xy}(L)$ (that is, $M=M_K$ or $M'_K$ and $N=M_L$ or $M'_L$, see Figure \ref{fig_NLTPFA_NLMPFA}). Then, we apply a convex decomposition of the transverse flux branches in $F_1$ and $F_2$
\begin{equation}
\begin{aligned}
F_1={}&\lambda_1(f_K-f_L)
+c_1\nu_{1}(f_K-f_{M})+\underbrace{(1-c_1)\nu_{1}(f_K-f_{M})}_{G_1},\\
F_2={}&-\lambda_{2}(f_L-f_K)
-c_2\nu_{2}(f_{L}-f_N)-\underbrace{(1-c_2)\nu_{2}(f_{L}-f_N)}_{G_2},
\end{aligned}
\end{equation}
with $0<c_1,c_2<1$. The decomposition parameters $c_1$ and $c_2$ are carefully selected (see below) to ensure that the linearized form of the scheme satisfies the monotonicity conditions (\textbf{A0})--(\textbf{A3}). The expression of $F_{K,\sigma}$ becomes
\begin{equation}
    F_{K,\sigma}=(\mu_1\lambda_1+\mu_2\lambda_2)(f_K-f_L)+
    c_1\mu_1\nu_1(f_K-f_{M})+c_2\mu_2\nu_2(f_N-f_L)+
    [\mu_1G_1-\mu_2G_2].
\label{eq_flux_G1G2}
\end{equation}
We construct $\mu_1,\mu_2$ in a usual way, to either annihilate the bracketed expression in the last equation or express it in a more appropriate way:
\begin{equation*}
    \begin{gathered}
    \left\{\begin{array}{ll}
    \displaystyle\mu_1=\frac{|G_2|}{|G_1|+|G_2|}\,,\quad\mu_2=\frac{|G_1|}{|G_1|+|G_2|}&\quad\text{ if }|G_1|+|G_2|\neq 0,\\
    \mu_1=\mu_2=\frac12&\quad\mbox{ otherwise}.
    \end{array}\right.
    \end{gathered}
\end{equation*}
This leads to the following cases:
\begin{itemize}
\item if $G_1G_2\geq 0$ then $G_1|G_2|=|G_1|G_2$ and thus $\mu_1G_1-\mu_2G_2=0$. The expression \eqref{eq_flux_G1G2} becomes
    \begin{equation}
    F_{K,\sigma}=(\mu_1\lambda_1+\mu_2\lambda_2)(f_K-f_L)
    +c_1\mu_1\nu_1(f_K-f_{M})+c_2\mu_2\nu_2(f_N-f_L).
    \label{eq_G1_G2_pos}
    \end{equation}
\item if $G_1G_2 <0$ then $|G_1|G_2=-G_1|G_2|$ so $\mu_1G_1- \mu_2G_2=2\mu_1G_1$. The expression \eqref{eq_flux_G1G2} becomes
\begin{equation}
    F_{K,\sigma}=(\mu_1\nu_1+\mu_2\nu_2)(f_K-f_L)
    +(2-c_1)\mu_1\nu_1(f_K-f_{M})+c_2\mu_2\nu_2(f_N-f_L).
    \label{eq_G1_G2_neg}
\end{equation}
\end{itemize}

These expressions show that the flux discretization retains all the cell centered unknowns used by $F_1$ and $F_2$, namely, $f_K,f_L,f_M$ and $f_N$. This leads, after freezing the non-linearities in $\mu_1$ and $\mu_2$, to a 9-point stencil scheme instead of the 5-point stencil obtained for the NLTPFA and the NLMPFA schemes.

The branches $c_2\mu_2\nu_2(f_N-f_L)$ and $c_2\mu_2\nu_2(f_N-f_L)$ in \eqref{eq_G1_G2_pos}--\eqref{eq_G1_G2_neg} prevent the fluxes from having the form \eqref{eq_NLMPFA}, so the scheme's matrix will not be, in general, an M-matrix. The key idea is however to tune the coefficients $c_1,c_2$ in these branches to ensure that this matrix respect the relaxed monotonicity conditions (\textbf{A0})--(\textbf{A3}). These coefficients can be determined cell-by-cell, or globally using the most demanding local monotonicity conditions. We choose the latter, and we also associate a unique couple $(c_1,c_2)$ to each discretization direction. \ref{sec:appenB} reports the analytical developments followed to obtain such a suitable couple $(c_1,c_2)$, which only depends on the coefficients $\lambda_r,\nu_r$ in the linear fluxes \eqref{eq_F1_F2_lambda} and not on the frozen nonlinear coefficients $\mu_r$ (and thus not on the Picard iterations), see \eqref{eq:conditions.ci}.

Due to the cell-centered discretization logic of the new scheme, similar to the NLMPFA, with ``relaxed'' monotonicity conditions, we refer to the new scheme as \textbf{Relaxed NLMPFA} or \textbf{R-NLMPFA}.

\subsection{Positivity and extremum preserving properties}

We write the scheme in the following form 
\begin{equation}\label{eq:scheme.nl}
    \mathbb{S}(\boldsymbol{U})\boldsymbol{U}=\boldsymbol{R},
\end{equation}
where $\boldsymbol{U}=((f_K)_{K\in T},(\overline{f}_\sigma)_{\sigma\in \edges_{\rm ext}})$ is a vector containing the unknowns of the scheme and the discretized boundary conditions, and $\boldsymbol{R}$ represents the contribution of the volume source $S$. In \eqref{eq:scheme.nl}, the nonlinearities in $\mathbb{S}(\boldsymbol{U})$ correspond to those (frozen in the nonlinear resolution procedure) in $\mu_1,\mu_2$ in \eqref{eq_flux_G1G2}, while the linear term $\boldsymbol{U}$ gathers contributions of $f_K,f_L,f_M,f_N$ explicitly visible in \eqref{eq_G1_G2_pos}--\eqref{eq_G1_G2_neg}.

On the discrete level, the positivity and minimum-maximum preserving principles can be expressed in the following forms
\begin{equation}
\text{if $\boldsymbol{R}\geq 0$ and $\overline{f}_\sigma\geq 0$ for all $\sigma\in \edges_{\rm ext}$, then $f_K\geq 0$ for all $K\in T$},
\label{eq_positivity_principle}
\end{equation}
\begin{equation}
\text{if $\boldsymbol{R}=0$ then $\min_{\sigma\in \edges_{\rm ext}}\overline{f}_\sigma\leq f_K\leq \max_{\sigma\in \edges \rm ext}\overline{f}_\sigma$ for all $K\in T$.}
\label{eq_minimum_maximum_principle}
\end{equation}

\begin{theorem}[Monotonicity of the scheme]
The finite volume scheme based on the fluxes defined by \eqref{eq_F1_F2_lambda}--\eqref{eq_G1_G2_neg}, whose discretization matrix respects (\textbf{A0})--(\textbf{A3}) when its nonlinear terms are frozen, preserves at each nonlinear iteration both positivity \eqref{eq_positivity_principle} and minimum-maximum principle \eqref{eq_minimum_maximum_principle}, achieving consequently full monotonicity.
\end{theorem}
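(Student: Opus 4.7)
The plan is to reduce the claim to the monotonicity result of \cite{nordbotten2007} applied at each frozen Picard iterate, and then transfer both the positivity and the minimum--maximum principle through a linearity-plus-shift argument based on the scheme's consistency on constants.

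I would first fix a Picard step, so that $\mu_1,\mu_2$ are frozen: the flux balance then becomes a linear system in the cell unknowns. Splitting $\boldsymbol U$ into its cell block $F=(f_K)_{K\in T}$ and its prescribed boundary block $\overline f=(\overline f_\sigma)_{\sigma\in\edges_{\rm ext}}$, the scheme (\ref{eq:scheme.nl}) reads $A\,F=\boldsymbol R-B\,\overline f$, where $A$ is the nine-point matrix whose entries are exactly the $m_k^{i,j}$ entering (\textbf{A0})--(\textbf{A3}), and $B$ collects the coefficients of the boundary edge unknowns. By hypothesis $A$ satisfies (\textbf{A0})--(\textbf{A3}); the result of \cite{nordbotten2007} then produces a weakly regular splitting $A=M-N$ with $M^{-1}\ge 0$, $N\ge 0$ and $\rho(M^{-1}N)<1$, whence $A^{-1}=(I-M^{-1}N)^{-1}M^{-1}\ge 0$ by a Neumann series, i.e.\ $A$ is monotone.

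For (\ref{eq_positivity_principle}), I would combine this monotonicity with the fact that every flux in (\ref{eq_G1_G2_pos})--(\ref{eq_G1_G2_neg}) is an affine combination of differences $f_\bullet-f_{\bullet'}$. Once the contributions of boundary unknowns are collected in $\sum_\sigma F_{K,\sigma}=S_K$ and the same sign structure that underpins (\textbf{A0})--(\textbf{A3}) is invoked, $-B$ turns out to be non-negative, so $-B\,\overline f\ge 0$ whenever $\overline f\ge 0$; together with $\boldsymbol R\ge 0$ this yields a non-negative right-hand side in $A\,F=\boldsymbol R-B\,\overline f$, and $A^{-1}\ge 0$ then gives $F\ge 0$.

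For (\ref{eq_minimum_maximum_principle}), set $M^{\star}=\max_\sigma\overline f_\sigma$ and $V=\boldsymbol U-M^{\star}\mathbf 1$. Since each flux is a difference of $f$-values, $\mathbb{S}(U^s)\,\mathbf 1=0$, and with $\boldsymbol R=0$ one gets $\mathbb{S}(U^s)\,V=0$, i.e.\ $A\,V_{\rm int}=-B\,V_{\rm bd}$. With $V_{\rm bd}\le 0$ and $-B\ge 0$ from the previous step, the right-hand side is non-positive, and monotonicity of $A$ gives $V_{\rm int}\le 0$, that is $f_K\le M^{\star}$ for every $K$; the reverse bound follows from the symmetric argument with $m^{\star}=\min_\sigma\overline f_\sigma$. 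The main obstacle is really the second step, namely ensuring that Nordbotten's theorem applies through (\textbf{A0})--(\textbf{A3}); this is the whole reason for the careful tuning of the relaxation parameters $c_1,c_2$ performed in \ref{sec:appenB}. Once that monotonicity of $A$ is in hand, positivity and the MMP both follow from $A^{-1}\ge 0$ combined with the difference form of the fluxes.
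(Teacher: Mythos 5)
Your proposal is correct and follows essentially the same route as the paper: monotonicity of the frozen matrix via the weakly regular splitting of \cite{nordbotten2007} gives positivity, and the minimum--maximum principle is then obtained by applying that positivity to the shifted vectors built from $\max_\sigma\overline f_\sigma$ and $\min_\sigma\overline f_\sigma$, using $\mathbb{S}(\boldsymbol U^s)\boldsymbol 1=0$. You merely spell out two steps the paper leaves implicit (the Neumann-series argument for $A^{-1}\ge 0$ and the sign of the boundary block $B$), which is a welcome but not substantively different elaboration.
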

\begin{proof}
The proof of the positivity of the linearized scheme is straightforward due to the monotonicity of its matrix. 
For the extremum principle we consider $\boldsymbol{R}=0$. By the expressions \eqref{eq_G1_G2_pos}--\eqref{eq_G1_G2_neg}, at each linearization step the matrix $\mathbb{S}(\boldsymbol{U}^s)$ satisfies $\mathbb{S}(\boldsymbol{U}^s)\boldsymbol{1}=0$ ($\boldsymbol{1}$ is the vector with all components equal to 1). Thus, the vectors $\boldsymbol{V}=(\max_{\sigma\in \edges_{\rm ext}}\overline{f}_\sigma)\boldsymbol{1}-\boldsymbol{U}^{s+1}$ and $\boldsymbol{W}=\boldsymbol{U}^{s+1}-(\min_{\sigma\in \edges_{\rm ext}}\overline{f}_\sigma)\boldsymbol{1}$ satisfy $v_\sigma\geq 0$ for all $\sigma\in \edges_{\rm ext}$ and $w_\sigma\geq 0$ for all $\sigma\in \edges_{\rm ext}$, and $\mathbb{S}(\boldsymbol{U}^s)\boldsymbol{V}=\mathbb{S}(\boldsymbol{U}^s)\boldsymbol{W}=0$.
Applying the positivity principle to $\boldsymbol{V}$ and $\boldsymbol{W}$ then yields $\min_{\sigma\in \edges_{\rm ext}}\overline{f}_\sigma\leq f_K\leq\max_{\sigma\in \edges_{\rm ext}}\overline{f}_\sigma$.
\end{proof}

\section{Numerical tests for stationary problems}\label{sec:tests}

We test here the numerical performances of R-NLMPFA and we compare them with the results of the NLTPFA and NLMPFA schemes (partially taken from \cite{dahmen2020a,dahmen2020b}). First, we start with a numerical test involving a uniform but highly anisotropic diffusion tensor to measure the computational gain achieved by R-NLMPFA. Then we move to three numerical tests related to the positivity, the minimum principle, and the minimum-maximum principle, using an inhomogeneous highly anisotropic diffusion tensor. Finally, we measure the accuracy of R-NLMPFA with a convergence study.

The following notations are used in the section: $N_u$ is the number of cell center unknowns, $f_{\rm min}$ and $f_{\rm max}$, the extremal values of $f$ inside the domain, and $N_{\rm iter}$ is the number of nonlinear Picard iterations to reach convergence fixed by a stopping criteria of $\epsilon=10^{-6}$. $R_{\rm under}=\frac{N_1}{N_u}$ is the ratio of undershoots, with $N_1$ the number of grid cells where $f_K<0$ when testing positivity, and $N_1$ the number of grid cells where $f_K<\min_{\sigma \in \edges_{\rm ext}}\overline{f}_\sigma$ when testing the minimum principle preserving property. $R_{\rm over}=\frac{N_2}{N_u}$ is the ratio of overshoots, with $N_2$ the number of grid cells where $f_K>\max_{\sigma\in \edges_{\rm ext}}\overline{f}_\sigma$ when testing the maximum principle preserving property. To evaluate the rate of convergence, we use the discrete counterpart of the $L^2$-norm:
\begin{equation}
\|f\|_{2,D}=\left(\sum_{K \in T}|K|f_K^2 \right)^{1/2}
\end{equation}
with $|K|$ the area of $K$. The relative approximation error is calculated based on the previously defined norm: 
\begin{equation}
\mbox{Err}_2=\frac{\|f-f_{\rm ref}\|_{2,D}}{\|f_{\rm ref}\|_{2,D}},
\end{equation}
where $f_{\rm ref}$ is the analytical solution chosen by the method of manufactured solution.

\subsection{Highly anisotropic uniform diffusion tensor test}
First, we consider the problem \eqref{eq_diffusion} in the unit square domain $\Omega=(0,0.5)^2$ with $S=0$. We fix the following diffusion tensor:
\begin{equation}
\dbarD =\begin{pmatrix}
10^7& 10^3 \\
10^3 &1
\end{pmatrix}.
\label{eq_tensor_uniform}
\end{equation}
The latter tensor is uniform, to focus solely on its anisotropy ratio that is $\approx 1,11\cdot 10^7$. We fix $\overline{f}(x,y)=\sin(\pi x)\sin(\pi y)$. The steady state diffusion equation is solved by the NLTPFA, NLMPFA and the R-NLMPFA schemes for three levels of mesh refinements and the Picard algorithm is initialized with unitary function ($f^0=1$). For the R-NLMPFA scheme, we choose the couple $c_1=8.327\cdot 10^{-6}$, $c_2=4.164\cdot 10^{-6}$ that respects the conditions presented in \ref{sec:appenB}. We report in Table \ref{tab_uniform} the number of iterations $N_{\rm iter}$ needed by each scheme to achieve nonlinear convergence.

\begin{table}[h!]
\caption{Anisotropic uniform diffusion tensor test: comparison of NLTPFA, NLMPFA and R-NLMPFA on uniform Cartesian grids.}
\label{tab_uniform}
\centering
\begin{tabular}{|l|c|c|c|c|}
    \hline
    \multicolumn{2}{|c|}{\diagbox[width=10em]{Scheme}{$N_u$}}&$20\times 20$&$40\times 40$&$80\times 80$\\
    \hline
        NLTPFA & $N_{\rm iter}$ &$4$ &$5$ &$8$  \\
	\hline
        NLMPFA & $N_{\rm iter}$ &$14$ &$25$ &$45$  \\
 	\hline
        \textbf{R-NLMPFA}&$N_{\rm iter}$ &$2$ &$2$ &$2$\\
    \hline
    \end{tabular}
\end{table}

The R-NLMPFA scheme requires much fewer nonlinear iterations (up to 6 times fewer) than the NLMPFA, thus preserving the maximum principle at a much lower computational cost. R-NLMPFA shows a similar number of nonlinear iterations as NLTPFA, all the while achieving full monotonicity that NLTPFA lacks.

If, instead of the residual 
\begin{equation}\label{first.residual}
\frac{||\boldsymbol{X}^{s+1}-\boldsymbol{X}^{s}||_\infty}{||\boldsymbol{X}^{s}||_\infty},
\end{equation}
we use in Algorithm \ref{algo:picard} the residual
\begin{equation}\label{second.residual}
\frac{||A(\boldsymbol{X}^{s+1})\boldsymbol{X}^{s+1}-\boldsymbol{B}^{s+1}||_2}{||\boldsymbol{B}^{s}||_2},
\end{equation}
then similar computational performances are obtained (with actually fewer iterations for NLTPFA and NLMPFA); see Table \ref{tab_uniform_1}. The residual \eqref{second.residual} is global and sensitive to the matrix system convergence that can be attained sometimes before the local convergence of the solution measured by \eqref{first.residual}. Given that this latter residual is therefore actually more demanding, we continue using it in the following tests in this section.

\begin{table}[ht!]
\caption{Anisotropic uniform diffusion tensor test using the residual \eqref{second.residual}: comparison of NLTPFA, NLMPFA and R-NLMPFA on uniform Cartesian grids.}
\label{tab_uniform_1}
\centering
\begin{tabular}{|l|c|c|c|c|}
    \hline
    \multicolumn{2}{|c|}{\diagbox[width=10em]{Scheme}{$N_u$}}&$20\times 20$&$40\times 40$&$80\times 80$\\
    \hline
        NLTPFA & $N_{\rm iter}$ &$2$ &$2$ &$3$  \\
	\hline
        NLMPFA & $N_{\rm iter}$ &$4$ &$6$ &$9$  \\
 	\hline
        \textbf{R-NLMPFA} &$N_{\rm iter}$ &$2$ &$2$ &$2$\\
    \hline
    \end{tabular}
\end{table}

\subsection{Positivity preserving test}\label{sec:test.positivity}

We consider the problem \eqref{eq_diffusion} in the unit square domain $\Omega=(0,1)^2$. We fix the following diffusion tensor:
\begin{equation}
\dbarD(x,y) =\frac{1}{x^2+y^2}\begin{pmatrix}
\alpha x^2+y^2 & (\alpha-1)xy \\
(\alpha-1)xy & x^2+\alpha y^2
\end{pmatrix}.
\label{eq_tensor_alpha}
\end{equation}
The latter tensor is nonuniform, has $1$ and $\alpha$ as eigenvalues, and thus an anisotropy ratio of $\frac{1}{\alpha}$; this ratio is fixed at $10^9$ in our tests, as this is the typical magnitude of anisotropy ratios in radiation belts applications \cite{dahmen2020a,dahmen2020b}. The source $S$ adopts the following form:
\begin{equation}
S(x,y)=\left\{
    \begin{array}{ll}
        1 &\text{ if }(x,y)\in [0.25,0.75]\times[0.25,0.75],\\
		0 &\text{ otherwise}.
    \end{array}
\right.
\end{equation}
We fix homogeneous Dirichlet conditions at $x=0$, $y=0$, $y=1$, and a no-flux boundary at $x=1$ (to reflect typical real boundary conditions of radiation belts models). For  the  R-NLMPFA  scheme, we  choose  the  couple $c_1= 2.548\cdot 10^{-5}$, $c_2= 1.274\cdot 10^{-5}$. The results are reported in Table \ref{tab_pos}, in which we can see that all schemes preserve the positivity of the solution; R-NLMPFA however requires fewer Picard iterations $N_{\rm iter}$, with a reduction of up to 27\% compared to NLTPFA and up to 67\% compared to NLMPFA.

\begin{table}[h!]
\caption{Postivity preserving numerical test: comparison of the NLTPFA, NLMPFA and R-NLMPFA on uniform Cartesian grids.}
    \label{tab_pos}
\begin{center}
\begin{tabular}{|l|c|c|c|c|}
    \hline
    \multicolumn{2}{|c|}{\diagbox[width=10em]{Scheme}{$N_u$}}&$20\times 20$&$40\times 40$&$80\times 80$\\
    \hline
    NLTPFA & $f_{\rm min}$ &$6.44\cdot 10^{-10}$ &$7.55\cdot 10^{-15}$ &$2.52\cdot 10^{-23}$\\
    \cline{2-5}
        & $N_{\rm iter}$ &$81$ &$141$ &$222$  \\
	\hline
        NLMPFA & $f_{\rm min}$ &$2.82\cdot 10^{-7}$ &$9.15\cdot 10^{-10}$ &$2.71\cdot 10^{-13}$\\
    \cline{2-5}
        & $N_{\rm iter}$ &$153$ &$310$ &$433$  \\
 	\hline
    \textbf{R-NLMPFA} & $f_{\rm min}$ &$1.57\cdot10^{-7}$ &$3.6\cdot10^{-10}$ &$8.85\cdot10^{-14}$  \\
    \cline{2-5} 
        &$N_{\rm iter}$ &$68$ &$102$ &$193$\\
    \hline
    \end{tabular}
\end{center}
\end{table}

\subsection{Minimum principle preserving test}

The second numerical is related to the minimum preserving principle. We consider the same diffusion problem as in Section \ref{sec:test.positivity}, except for a full domain Dirichlet boundary condition $\overline{f}=1$ on $\partial \Omega$. The results are reported in Table \ref{tab_min}.

\begin{table}[h!]
\caption{Minimum preserving test: comparison of the NLTPFA, NLMPFA and R-NLMPFA on uniform Cartesian grids.}
\label{tab_min}
\centering
\begin{tabular}{|l|c|c|c|c|}
 	\hline
    \multicolumn{2}{|c|}{\diagbox[width=10em]{Scheme}{$N_u$}}&$20\times 20$&$40\times 40$&$80\times 80$\\
    \hline
        NLTPFA & $f_{\rm min}$ &$0.9983$ &$0.9984$ &$0.9987$\\
    \cline{2-5}
        & $N_{\rm iter}$ &$62$ &$83$ &$66$  \\
    \cline{2-5}
        & $R_{\rm under}$&$14.25\%$&$14.31\%$&$13.96\%$\\
    \hline
        NLMPFA & $f_{\rm min}$ &$1.00$ &$1.00$ &$1.00$\\
    \cline{2-5}
        & $N_{\rm iter}$ &$66$ &$107$ &$201$  \\
    \hline
    \textbf{R-NLMPFA} & $f_{\rm min}$ &$1.00$ &$1.00$ &$1.00$\\
    \cline{2-5}
        & $N_{\rm iter}$ &$58$&$93$ &$128$  \\
    \hline
    \end{tabular}
\end{table}

NLMPFA and the R-NLMPFA respect the minimum principle ($f_K\geq 1$ for all $K\in T$) as they satisfy an extremum principle preserving property in addition to the monotonicity of their discretization matrix. This is not the case of the NLTPFA scheme that is only positivity preserving and presents minimum values under 1. The NLTPFA scheme is the most efficient on this test, but at the price of developing many undershoots. Among the two schemes that preserve the minimum principle, the R-NLMPFA outperforms the NLMPFA scheme with up to $36\%$ fewer nonlinear iterations.

\subsection{Minimum-maximum principle preserving test}
For the third test, the domain is reduced to $\Omega=(0,0.5)^2$, and we take $\dbarD$ as in \eqref{eq_tensor_alpha}. We impose $\overline{f}(x,y)=\sin(\pi x)\sin(\pi y)$ on the boundary at $x=0$, $y=0$, $y=0.5$, and no flux condition at $x=0.5$; we also set $S=0$. For the  R-NLMPFA  scheme, the previous couple $c_1= 2.548\cdot 10^{-5}$, $c_2= 1.274\cdot 10^{-5}$ remains valid. The results are synthesised in Table \ref{tab_ext}.

\begin{table}[ht!]
\caption{minimum-maximum preserving test: comparison of the NLTPFA, NLMPFA and R-NLMPFA on uniform Cartesian grids.}
\label{tab_ext}
\centering 
\begin{tabular}{|l|c|c|c|c|}
 	\hline
    \multicolumn{2}{|c|}{\diagbox[width=10em]{Scheme}{$N_u$}}&$20\times 20$&$40\times 40$&$80\times 80$\\
    \hline
    NLTPFA & $f_{\rm min}$ &$2.22\cdot 10^{-15}$ &$ 4.67 \cdot 10^{-27}$ &$3.60\cdot 10^{-44}$\\
    \cline{2-5}
        & $f_{\rm max}$ &$0.9955$ &$0.9988$ &$0.9997$  \\
    \cline{2-5}
        & $N_{\rm iter}$ &$95$ &$181$ &$338$  \\
    \hline
    NLMPFA & $f_{\rm min}$ &$2.10\cdot 10^{-9}$ &$1.63\cdot 10^{-13}$ &$3.73\cdot 10^{-18}$\\
    \cline{2-5}
        & $f_{\rm max}$ &$0.9950$ &$0.9988$ &$0.9995$  \\
    \cline{2-5}
        & $N_{\rm iter}$ &$126$ &$250$ &$606$  \\
    \hline
    \textbf{R-NLMPFA} & $f_{\rm min}$ &$7.30\cdot 10^{-10}$ &$ 4.94 \cdot 10^{-14}$ &$6.80\cdot 10^{-20}$\\
    \cline{2-5}
        & $f_{\rm max}$ &$0.9918$ &$0.9982$ &$0.9995$  \\
    \cline{2-5}
        & $N_{\rm iter}$ &$63$ &$127$ &$271$  \\
    \hline
    \end{tabular}
\end{table}
For this test, all three schemes preserve the minimum and the maximum principles. The R-NLMPFA confirms its efficiency in terms of nonlinear convergence with lower $N_{\rm iter}$ values than NLTPFA (up to 33$\%$ fewer iterations) and NLMPFA (up to 55$\%$ fewer iterations).

\subsection{Convergence study}

We consider the problem \eqref{eq_diffusion} in the domain $\Omega=(0,0.5)^2$, with the diffusion tensor \eqref{eq_tensor_alpha} and $\alpha=10^{-6}$. We fix $f_{\rm ref}(x,y)=\sin(\pi x)\sin(\pi y)$ and $S$ is computed from this exact solution. The results of the convergence study of the three schemes are reported in Figure \ref{fig_convergence} and Table \ref{table_conv_study_1}.

\begin{figure}
\begin{center}
\begin{tikzpicture}
\begin{loglogaxis}[
    title={Convergence study},
    xlabel={$N_u$},
    ylabel={$\mbox{Err}_2(\%)$},
    xmin=09, xmax=110,
    ymin=10^-3, ymax=1,
    xtick={10,100},
    ytick={10^-3,10^-2,10^-1,1},
    legend pos=south west,
    ymajorgrids=true,
    grid style=dashed,
]
\addplot[
    color=blue,
    mark=square,
    ]
    coordinates {
    (20,0.0970)(40,0.0282)(80,0.0073)
    };
\addplot[
    color=green,
    mark=square,
    ]
    coordinates {
    (20,0.3008)(40,0.0767)(80,0.0194)
    };
\addplot[
    color=red,
    mark=square,
    ]
    coordinates {
    (20,0.2658)(40,0.0733)(80,0.0195)
    };
\addplot[
    color=black,
    style=dashed
    ]
    coordinates {
    (20,80/400)(40,80/1600)(80,80/6400)
    };
    \legend{NLTPFA,NLMPFA,R-NLMPFA,2nd order}
\end{loglogaxis}
\end{tikzpicture}
\caption{Rates of convergence of NLTPFA, NLMPFA and R-NLMPFA}
\label{fig_convergence}
\end{center}
\end{figure}
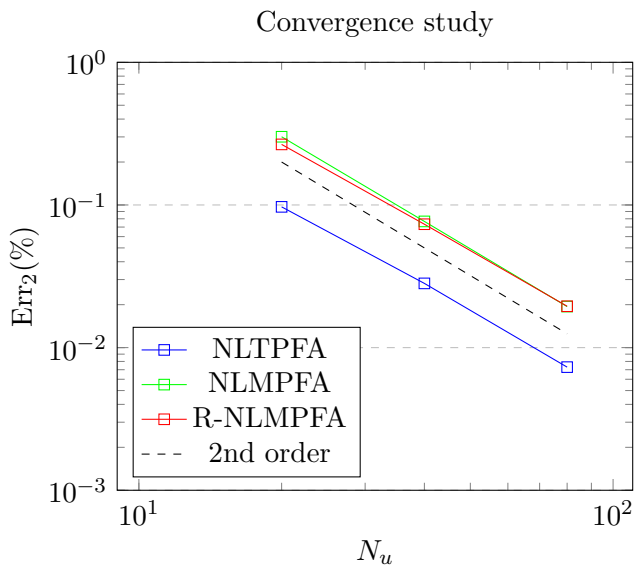

\begin{table}[ht!]
\caption{Convergence study: comparison of the NLTPFA, NLMPFA and R-NLMPFA on uniform Cartesian grids, Picard iterations initialised with $f^0=1$.}
\label{table_conv_study_1}
\centering
\begin{tabular}{|c|c|c|c|}
\hline
\diagbox[width=10em]{Scheme}{$N_u$}&$20\times 20$&$40\times 40$&$80\times 80$\\
\hline
NLTPFA&63&112&185\\
\hline
NLMPFA&79&134&216\\
\hline
\textbf{R-NLMPFA}&$66$ &$101$ &$140$\\\hline
\end{tabular}
\end{table}

The R-NLMPFA ($c_1=2.61\cdot 10^{-5}$, $c_2=1.305\cdot 10^{-5}$) shows a second order of convergence, with error magnitudes similar to the NLMPFA scheme. This might be explained by their similar cell-centered approach. Again the R-NLMPFA is more competitive with up to $35\%$ fewer iterations than the NLMPFA scheme and up to $24\%$ fewer iterations than the NLTPFA scheme.




\section{Application to radiation belts}
\label{sec:radbelt}
Earth's electron radiation belts are structures evolving around Earth and crossing the majority of satellite orbits (see Figure \ref{fig_radbelts}) \cite{horne2013}. Filled with highly energetic electrons trapped by Earth's magnetic field, they can present a rapidly varying and asymmetric dynamic correlated to solar activity.

The usual theoretical development adopted to describe radiation belt dynamics \cite{schulz1974,roederer2016} leads to a 3 dimension diffusion equation, governing the evolution of a phase space density (PSD) function $f$ $(MeV^{-1}.s^{-3})$ and expressed in the space $(y,E,L^*)$ with
\begin{itemize}
    \item  $y=\sin{\alpha_{eq}}$ the sine of the particle’s equatorial pitch angle, the angle between its velocity vector and the magnetic field at the equator (see Figure \ref{fig_radbelts}).
    \item $E$ the particle's energy expressed in $MeV$.
    \item $L^*$ the radial distance of the magnetic field line (dipole approximation) traveled by the trapped particle at the equator (see Figure \ref{fig_radbelts}) \cite{roederer2016}. 
\end{itemize}
\begin{figure}[ht!]
    \centering
    \includegraphics[scale=0.55]{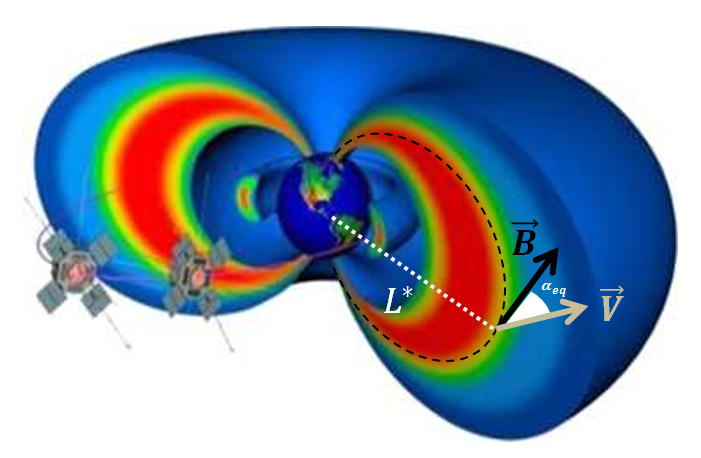}
    \caption{Electron radiation belts representation with the $L^*$ parameter and the equatorial pitch angle $\alpha_{eq}$, in addition to the Van Allen probes orbit, adapted from \cite{kirby2012}.}
    \label{fig_radbelts}
\end{figure}

Due to the multi-scale, localized and highly asymmetric physical processes involved in radiation belt dynamics, the local diffusion frame expressed in $(y,E)$ is spatially inhomogeneous and presents high levels of anisotropy as shown in Figure \ref{fig_tensor} for $\dbarD$ taken at $L^*=4.49$. The 2D tensor anisotropy ratio is also represented in Figure \ref{fig_ratio_aniso}.
\begin{equation}
\frac{\partial f}{\partial t}=\frac{1}{G}\nabla \cdot (G \dbarD\nabla f)
\label{eq_eyl}
\end{equation}
with
\begin{equation}
\dbarD=\begin{pmatrix}
D_{yy}&D_{yE}\\D_{yE}&D_{EE}
\end{pmatrix}\end{equation}
and $G$ the Jacobian of the transformation from the canonical space to $(y,E)$.

\begin{figure}[ht!]
    \centering
    \includegraphics[scale=0.58]{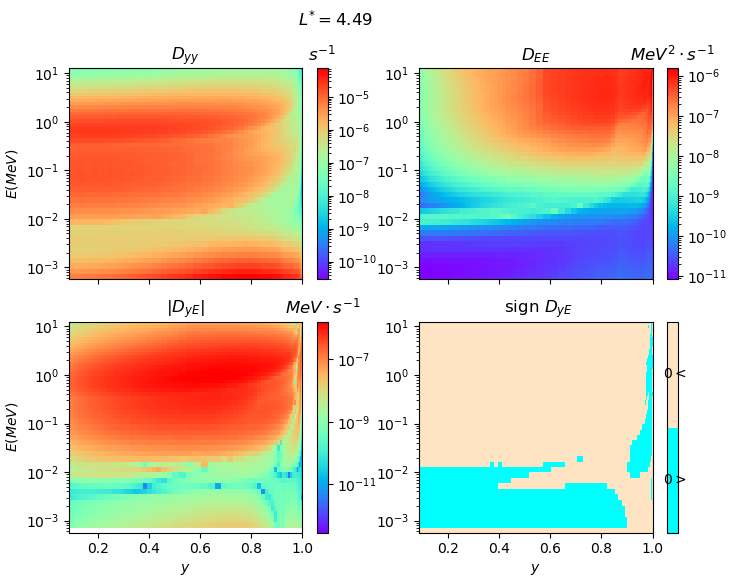}
    \caption{2D mapping of the diffusion tensor terms $D_{yy}$, $D_ {EE}$ and $D_{yE}$, taken at $L^*=4.49$.}
    \label{fig_tensor}
\end{figure}

\begin{figure}[ht!]
    \centering
    \includegraphics[scale=0.75]{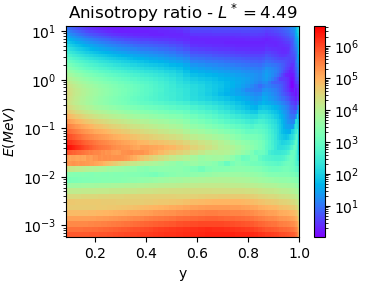}
    \caption{Ratio of anisotropy of the diffusion tensor taken at $L^*=4.49$.}
    \label{fig_ratio_aniso}
\end{figure}
Based on this diffusion model, the radiation belt study community implemented several physical codes  like the pioneering ONERA's Salammb\^o code \cite{varotsou2008,bourdarie2012,beutier1995}, the \textit{British Antarctic Survey} (BAS) Radiation Belt model code \cite{glauert2014}, the \textit{Storm-Time Evolution of Electron Radiation Belt} (STEERB) code \cite{su2010} and the \textit{Versatile Electron Radiation Belt} (VERB) code \cite{subbotin2009}. Cumulative improvements in the modelling of physical processes occurring in the inner magnetosphere led to more and more accurate depictions of the radiation belt dynamics by these codes, especially during the main phases of geomagnetic events. However, their finite difference based numerical resolution started to show its limit to tackle all the induced numerical challenges \cite{dahmen2020a,dahmen2020b}.

Recently, a transition to a finite volume based resolution was initiated in the Salammb\^o code. This was motivated by the suitability of the FV method for diffusion equations and to take advantage of the conservativity property. In order to cope with the high levels of anisotropy and take into account the tricky cross diffusion terms $D_{yE}$ without numerical artefacts, the use of nonlinear finite volume schemes seemed inevitable. Thus, NLPTFA and NLMPFA schemes were adapted and tested over the tricky pitch angle and energy diffusion \eqref{eq_eyl}, with the results reported in \cite{dahmen2020a}.

In this section, we test the R-NLMPFA in the radiation belt context and we focus on its computational cost, which is compared to NLTPFA and NLMPFA.
We consider the transient diffusion problem \eqref{eq_eyl} evaluated over $T_{\rm simu}=90 000s$. We adopt the 2D diffusion tensor $\dbarD$ evaluated at the plan $L^*=4.49$ and showed in Figure \ref{fig_tensor}. Note that the diffusion coefficients were derived from estimations of resonant particle--particle interactions wave--particle (using the WAPI code \cite{sicard2008}).

We start with the initial state $f_{\rm init}=10^{30}$ and we impose the boundary conditions presented in Table \ref{tab:bc}.

The Picard algorithm is initialized with the output of a linear FV scheme, cleaned from overshoots and undershoots, and we adopt the residual \eqref{second.residual}, more adapted to the high dispersion of the solution. We also fix a maximum number of nonlinear iterations $N_{\rm iter, \rm max}=300$, and we consider that the Picard algorithm did not converge above that number. We report in Table \ref{tab_NLTPFA_vs_NLMPFA_vs_NLMONOT_transient} the extremum values of $f$, the average number of Picard iterations per time iteration $N_{\rm iter,\rm avg}$, the percentage of time iterations where the Picard algorithm did not converge ($N_{\rm iter}>300$).

\begin{center}
\begin{table}[ht!]
\caption{Boundary conditions adopted for the transient simulation.}
\label{tab:bc}
\centering
\begin{tabular}{|l|c|c|}
\hline
\emph{Boundary location} & \emph{Boundary condition type}&\emph{Explanation}\\
\hline
$y_{\rm min}=0.0867$& $f=0$& Absence of trapped particles at the atmosphere\\
\hline
$y_{\rm max}=1.0$&$\frac{\partial f}{\partial y}=0$& Equatorial symmetry\\
\hline
$E_{\rm min}=5.664\cdot 10^{-4}~MeV$&$f=10^{30}~ MeV^{-1}s^{-3}$& Low energy seeding\\
\hline
$E_{\rm max}=12.561~MeV$&$f=0$& Limit of magnetic trapping at very high energy\\
\hline
\end{tabular}
\end{table}
\end{center}

\begin{table}[ht!]\renewcommand{\arraystretch}{1.2}
\caption{Transient simulation results with the NLTPFA,NLMPFA and R-NLMPFA schemes for different $\Delta t$, at $T_{\rm simu}=90000s$.}
\label{tab_NLTPFA_vs_NLMPFA_vs_NLMONOT_transient}
\centering
\begin{tabular}{|l|c|c|c|c|}
 	\hline
	\emph{Scheme}& $\Delta t$ &$100s$&$1000s$&$10000s$\\
    \hline
    NLTPFA & $f_{\rm min}$ &$3.96\cdot 10^{26}$ &$4.05\cdot 10^{26}$ &$4.97\cdot 10^{26}$\\
    \cline{2-5}
        & $f_{\rm max}$ &$1.000001191\cdot 10^{30}$ &$1.000000448\cdot 10^{30}$ &$9.99\cdot 10^{29}$  \\
    \cline{2-5}
        & $N_{\rm iter, \rm avg}$ &$1$ &$1$ &$2$  \\
    \hline
    NLMPFA & $f_{\rm min}$ &$3.95\cdot 10^{26}$ &$4.06\cdot 10^{26}$ &$4.96\cdot 10^{26}$\\
    \cline{2-5}
        & $f_{\rm max}$ &$9.99\cdot 10^{26}$ &$9.99\cdot 10^{26}$ &$9.99\cdot 10^{29}$  \\
    \cline{2-5}
        & $N_{\rm iter, \rm avg}$ &$1.23$ &$7.95$ &$242.66$  \\
    \cline{2-5}
        & $\%_{\rm no-conv}$ &$0$ &$0$ &$66.66\%$  \\
    \hline
    R-NLMPFA & $f_{\rm min}$ &$4.12\cdot 10^{26}$&$4.34\cdot 10^{26}$&$5.18\cdot 10^{26}$\\
    \cline{2-5}
        & $f_{\rm max}$ &$9.99\cdot 10^{29}$&$9.99\cdot 10^{29}$&$9.99\cdot 10^{29}$\\
    \cline{2-5}
        & $N_{\rm iter, \rm avg}$ &$1$&$1.01$&$3.11$\\
    \hline
    \end{tabular}
\end{table}

For all the time steps $\Delta t$ we tested, the NLPTFA scheme showed the least $N_{\rm iter,\rm avg}$ values (at maximum 2 Picard iterations per time iteration), followed very closely by the R-NLMPFA (at maximum 4 Picard iterations per time iteration). Hence, the transition from a only positivity preserving scheme to a full monotonicity preserving scheme in our application does not sensibly increase the computational cost. Focusing on the two schemes that also preserve extremum values, the R-NLMPFA prevails with much lower $N_{\rm iter,\rm avg}$ values than the NLMPFA schemes. Most notably, we did not observe with R-NLMPFA any oscillating profiles in the residuals, contrary to those observed with the NLMPFA scheme (see Figure \ref{fig_residue}). This annoying behavior, frequently reported with the NLMPFA scheme, is mainly reponsible for the non-convergence of the Picard algorithm at some time iterations. The R-NLMPFA very likely avoids their occurrence due to its relaxed linearized stencil.

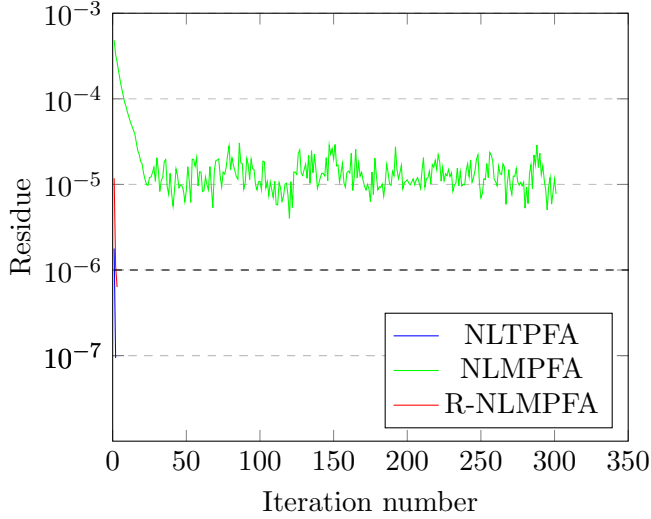
\begin{figure}
\begin{center}
\begin{tikzpicture}
\begin{axis}[
    xlabel={Iteration number},
    ylabel={Residue},
    ymode={log},
    xmin=0, xmax=350,
    ymin=10^-8, ymax=10^-3,
    xtick={0,50,100,150,200,250,300,350},
    ytick={10e-8,10^-7,10^-6,10^-5,10^-4,10^-3,10^-2},
    legend pos=south east,
    ymajorgrids=true,
    grid style=dashed]
\addplot[
    color=blue]
    coordinates {
    (1,1.783686e-6)(2,9.3817305e-8)
    };

\addplot[
    color=green]
    table[x=i,y=n, col sep=comma]{NLMPFA_res.dat};
\addplot[
    color=red]
    coordinates {
    (1,1.18021376e-5)(2,1.125134226e-6)(3,6.31739960e-7)};
    \legend{NLTPFA,NLMPFA,R-NLMPFA}

\addplot[
    color=black,style=dashed]
    coordinates {
    (1,1e-6)(1e3,1e-6)};
    \legend{NLTPFA,NLMPFA,R-NLMPFA}
\end{axis}
\end{tikzpicture}
\end{center}
\caption{Residuals at each Picard iteration step, at $n\times \Delta t=70000s$ for $\Delta t=10000s$}
\label{fig_residue}
\end{figure}

\section{Conclusion}\label{sec:conclusion}
We presented in this paper the construction of a new nonlinear finite volume scheme, R-NLMPFA, adapted to highly anisotropic diffusion equations. This scheme shares the convex combination structure adopted by typical nonlinear FV schemes and a cell-centered formulation. However, it stands out with its monotonicity preserving property obtained by a set of conditions less restrictive than those needed for an M-matrix construction. This construction leads to a full monotonicity preserving scheme, i.e., a positivity and minimum-maximum principle preserving scheme, in addition to a larger linearized stencil over Cartesian grids. Adimensionnal numerical tests involving diffusion tensors with very high anisotropy ratios showed an important reduction of the computational cost with the R-NLMPFA, especially when compared to the analogous extremum preserving scheme NLMPFA. These results were also confirmed on the numerically challenging radiation belts application. The next step consists in generalizing the construction of the R-NLMPFA to three dimensions and/or quadrilateral meshes, which seems achievable since the monotonicity conditions at the core of the construction are still valid for these types of mesh.

\section*{Acknowledgments}
This research was supported by CNES -- The French Space Agency, ONERA -- The French Aerospace Lab, and the University of Monash.

\appendix
\section{Construction of NLTPFA and NLMPFA convex combination weights}\label{sec:appenA}

The NLTPFA scheme construction is based on the forms \eqref{eq_F1_F2_edge} of $F_1$ and $F_2$. Injecting these expressions in \eqref{eq_pond} gives
\begin{equation}
    F_{K,\sigma}=\alpha_{K,L}f_K-\beta_{K,L}f_{L}-(\mu_1a_1(f)-\mu_2a_2(f))
\end{equation}
with $a_1,a_2>0$ containing the edge unknowns, responsible for the loss of the $M$-matrix structure. The expression $\mu_1a_1-\mu_2a_2$ is annihilated by imposing $\mu_1a_1-\mu_2a_2=0$ and $\mu_1+\mu_2=1$, which yields
\begin{equation*}
    \left\{\begin{array}{ll}
    \mu_1=\frac{a_2}{a_1+a_2}\quad\mbox{ and }\quad \mu_2=\frac{a_1}{a_1+a_2}& \quad \text{ if }a_1+a_2\neq 0,\\
    \mu_1=\mu_2=\frac12&\quad\text{ otherwise}.
    \end{array}\right.
\end{equation*}
The NLMPFA scheme construction, on the other hand, is based on the cell-centered form of $F_1$ and $F_2$ as reported in the equation \eqref{eq_F1_F2_cell_centered}, which can be framed in the following general form (with the same notation as in subsection \ref{sec:construction.scheme} -- so $M$ and $N$ are chosen accordingly to the signs of $D_{xy}(K)$ and $D_{xy}(L)$)
\begin{equation*}
    F_1=\lambda_1(f_K-f_L)+\underbrace{\nu_1(f_K-f_M)}_{G_1'}\quad\mbox{ and }\quad
    F_2=-\lambda_2(f_L-f_K)-\underbrace{\nu_2(f_L-f_N)}_{G_2'}.
\end{equation*}
Injecting these expressions in \eqref{eq_pond} yields
\begin{equation}
    F_{K,\sigma}=(\mu_1\lambda_1+\mu_2\lambda_2)(f_K-f_L)+[\mu_1G_1'-\mu_2G_2'].
    \label{eq_NLMPFA_G1_G2}
\end{equation}
The NLMPFA scheme aims to recast the latter flux expression in the multi-point form \eqref{eq_NLMPFA}, by discarding the transverse flux branch $G_2'$ responsible for the loss of the $M$-matrix structure. Thus, $\mu_1,\mu_2$ are chosen to either annihilate the bracketed expression in the last equation, or express it in a form that is appropriate to the LMP structure:
\begin{equation*}
    \begin{gathered}
    \left\{\begin{array}{ll}
    \mu_1=\frac{|G_2'|}{|G_1'|+|G_2'|}\quad\mbox{ and }\quad \mu_2=\frac{|G_1'|}{|G_1'|+|G_2'|} & \quad\mbox{ if }G_1'+G_2'\neq 0,\\[.7em]
    \mu_1=\mu_2=\frac12&\quad\text{ otherwise}.
    \end{array}\right.
    \end{gathered}
\end{equation*}
In other words:
\begin{itemize}
\item if $G_1'G_2'\geq 0$ then $G_1'|G_2'|=|G_1'|G_2'$ and thus
\begin{equation*}
F_{K,\sigma}=(\mu_1\lambda_1+\mu_2\lambda_2)(f_K-f_L).
\end{equation*}
\item if $G_1'G_2' <0$ then $|G_1'|G_2'=-G_1'|G_2'|$ and thus 
\begin{equation*}
F_{K,\sigma}=(\mu_1\lambda_1+\mu_2\lambda_2)(f_K-f_L)+2\mu_1G_1'
=(\mu_1\lambda_1+\mu_2\lambda_2)(f_K-f_L)+2\mu_1\nu_1(f_K-f_{M_K}).
\end{equation*}
\end{itemize}

\section{Choice of the weights $c_1,c_2$ to ensure the monotonicity conditions}\label{sec:appenB}

To present a choice of $c_1$ and $c_2$ in Section \ref{sec:construction.scheme} that ensures Conditions (\textbf{A0})--(\textbf{A3}), we use the notations in Figure \ref{fig_cell_sigma}.
\begin{figure}[ht!]
    \centering
    \begin{tikzpicture}
    \draw (0,0) -- (2,0) -- (2,2) -- (0,2)--cycle;
    \filldraw [black] (1,1) circle (3pt) node[anchor=south] {$c_K(i,j)$};
    \filldraw [blue] (2,1) circle (0pt) node[anchor=west] {$\sigma_1$};
    \filldraw [blue] (1,2) circle (0pt) node[anchor=south] {$\sigma_2$};
    \filldraw [blue] (0,1) circle (0pt) node[anchor=east] {$\sigma_3$};
    \filldraw [blue] (1,0) circle (0pt) node[anchor=north] {$\sigma_4$};
    \draw[thick,->] (-1,2) -- (-0.6,2) node[anchor= west] {$\boldsymbol{e_x}$};\draw[thick,->] (-1,2) -- (-1,2.4) node[anchor= east] {$\boldsymbol{e_y}$};
    \end{tikzpicture}
    \caption{edge notations on a cell $K$.}
    \label{fig_cell_sigma}
\end{figure}
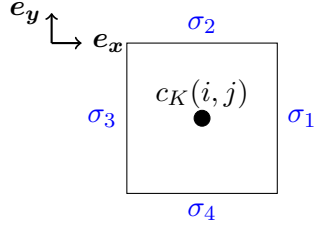
For each $p\in\{1,2,3,4\}$, following \eqref{eq_G1_G2_pos}--\eqref{eq_G1_G2_neg} the flux $F_{K,\sigma_p}$ is expressed, with obvious notations, as

\begin{equation}
F_{K,\sigma_p}=\left\{
\begin{array}{l@{\ }l}
  \begin{array}{l}
  \displaystyle(\mu_1^{\sigma_p}\lambda_1^{\sigma_p}+\mu_2^{\sigma_p}\lambda_2^{\sigma_p})(f_K-f_{L^{\sigma_p}})\\
  \quad\displaystyle+c_1^{\sigma_p}\mu_1^{\sigma_p}\nu_1^{\sigma_p}(f_K-f_{M^{\sigma_p}})
  +c_2^{\sigma_p}\mu_2^{\sigma_p}\nu_2^{\sigma_p}(f_{N^{\sigma_p}}-f_{L^{\sigma_p}})
  \end{array}
  &\mbox{ if } G_1^{\sigma_p}G_2^{\sigma_p}\geq 0\,,\\[1.5em]
    \begin{array}{l}
    \displaystyle(\mu_1^{\sigma_p}\lambda_1^{\sigma_p}+\mu_2^{\sigma_p}\lambda_2^{\sigma_p})(f_K-f_{L^{\sigma_p}})\\
    \quad\displaystyle+(2-c_1^{\sigma_p})\mu_1^{\sigma_p}\nu_1^{\sigma_p}(f_K-f_{M^{\sigma_p}})
    +c_2^{\sigma_p}\mu_2^{\sigma_p}\nu_2^{\sigma_p}(f_{N^{\sigma_p}}-f_{L^{\sigma_p}})
    \end{array}
    &\mbox{ if }G_1^{\sigma_p}G_2^{\sigma_p}<0.
\end{array}\right.
    \label{eq_F_k_sigma_i}
\end{equation}
We unify the previous expression for more clarity by introducing the parameter $\theta^{\sigma_p}$ such that
\[
\theta^{\sigma_p}=\left\{
    \begin{array}{ll}
    c_1^{\sigma_p}&\text{ if }G_1^{\sigma_p}G_2^{\sigma_p}\geq 0\\[.5em]
    2-c_1^{\sigma_p} &\text{ if }G_1^{\sigma_p}G_2^{\sigma_p}<0
    \end{array}\right.
\]
Then \eqref{eq_F_k_sigma_i} becomes
\begin{equation}
F_{K,\sigma_{i}}=
    (\mu_1^{\sigma_p}\lambda_1^{\sigma_p}+\mu_2^{\sigma_p}\lambda_2^{\sigma_p})(f_K-f_{L^{\sigma_p}})
    +\theta^{\sigma_p}\mu_1^{\sigma_p}\nu_1^{\sigma_p}(f_K-f_{M^{\sigma_p}})+c_2^{\sigma_p}\mu_2^{\sigma_p}\nu_2^{\sigma_p}(f_{N^{\sigma_p}}-f_{L^{\sigma_p}}).
    \label{eq_F_k_sigma_i_theta}
\end{equation}
We define a couple $(c_1,c_2)$ on each discretization direction, i.e., a unique couple for each $\sigma_p$ is applied on each mesh cell. Thus, we have to determine four pairs of weighing coefficients. Imposing the conservativity property \eqref{eq_conserv} links the coefficients on both sides of each edge:
\begin{equation}
    c_1^{\sigma_p}=c_2^{\sigma_p+2}\quad\mbox{ and }\quad c_2^{\sigma_p}=c_1^{\sigma_p+2}\quad
\text{ for }p\in {1,2}.
    \label{eq_c1_c2}
\end{equation}
This reduces the number of weights to be determined to 4, namely $(c_1^{\sigma_1},c_2^{\sigma_1},c_1^{\sigma_2},c_2^{\sigma_2})$.

We then apply the condition (\textbf{A0}), that is $m_{1}^{i,j}>0$.
From \eqref{eq_F_k_sigma_i_theta} we find
\begin{equation}\label{formula:m1}
\begin{aligned}
m_1^{i,j}={}&
    \mu_1^{\sigma_1}\lambda_1^{\sigma_1}+\mu_2^{\sigma_1}\lambda_2^{\sigma_1}+\theta^{\sigma_1}\mu_1^{\sigma_1}\nu_1^{\sigma_1}+\mu_1^{\sigma_2}\lambda_1^{\sigma_2}+\mu_2^{\sigma_2}\lambda_2^{\sigma_2}+\theta^{\sigma_2}\mu_1^{\sigma_2}\nu_1^{\sigma_2}\\
    &+\mu_1^{\sigma_3}\lambda_1^{\sigma_3}+\mu_2^{\sigma_3}\lambda_2^{\sigma_3}+\theta^{\sigma_3}\mu_1^{\sigma_3}\nu_1^{\sigma_3}+\mu_1^{\sigma_4}\lambda_1^{\sigma_4}+\mu_2^{\sigma_4}\lambda_2^{\sigma_4}+\theta^{\sigma_4}\mu_1^{\sigma_4}\nu_1^{\sigma_4}.
\end{aligned}
\end{equation}
which can be written in the following convinient form
\[m_1^{i,j}=\sum_{r=1}^4 \left(\mu_1^{\sigma_r}\lambda_1^{\sigma_r}+\mu_2^{\sigma_r}\lambda_2^{\sigma_r}+\theta^{\sigma_r}\mu_1^{\sigma_r}\nu_1^{\sigma_r}\right).\]
This shows, since all these coefficients are strictly positive by construction, that \textbf{(A0)} is always valid.

We now move to conditions (\textbf{A1}):
\[
    m_2^{i,j}<0\,,\quad m_4^{i,j}<0\,,\quad m_6^{i,j}<0\,,\quad m_8^{i,j}<0.
\]
The expressions of these coefficients depend on the location of $M^{\sigma_p}$ in the local cell numbering around the cell $K$ (see Figure \ref{fig_m}), which in turn depends on the sign of $D_{xy}(K)$. To take this into account and later on for \textbf{(A3)}) we define the following parameters
\begin{equation}\label{eq:def.psi}
\psi_{1}^{\sigma_p}=
\begin{cases}
1 \text{ if }D_{xy}(K)\geq 0\\
0 \text{ if }D_{xy}(K)< 0
\end{cases}
\end{equation}
and from \eqref{eq_F_k_sigma_i_theta} we deduce the expressions of $m_2^{i,j},m_4^{i,j},m_6^{i,j},m_8^{i,j}$:
\begin{equation}\label{eq:formulas.meven}
    \begin{aligned}
    m_2^{i,j}={}&-\left[\mu_1^{\sigma_1}\lambda_1^{\sigma_1}+\mu_2^{\sigma_1}\lambda_2^{\sigma_1}+c_2^{\sigma_1}\mu_2^{\sigma_1}\nu_2^{\sigma_1}+\psi_1^{\sigma_2}\theta^{\sigma_2}\mu_1^{\sigma_2}\nu_1^{\sigma_2}+(1-\psi_1^{\sigma_4})\theta^{\sigma_4}\mu_1^{\sigma_4}\nu_1^{\sigma_4}\right],\\
    m_4^{i,j}={}&
    -\left[\mu_1^{\sigma_2}\lambda_1^{\sigma_2}+\mu_2^{\sigma_2}\lambda_2^{\sigma_2}+c_2^{\sigma_2}\mu_2^{\sigma_2}\nu_2^{\sigma_2}+\psi_1^{\sigma_1}\theta^{\sigma_1}\mu_1^{\sigma_1}\nu_1^{\sigma_1}+(1-\psi_1^{\sigma_3})\theta^{\sigma_3}\mu_1^{\sigma_3}\nu_1^{\sigma_3}\right],\\
    m_6^{i,j}={}&-\left[\mu_1^{\sigma_3}\lambda_1^{\sigma_3}+\mu_2^{\sigma_3}\lambda_2^{\sigma_3}+c_2^{\sigma_3}\mu_2^{\sigma_3}\nu_2^{\sigma_3}+\psi_1^{\sigma_4}\theta^{\sigma_4}\mu_1^{\sigma_4}\nu_1^{\sigma_4}+(1-\psi_1^{\sigma_2})\theta^{\sigma_2}\mu_1^{\sigma_2}\nu_1^{\sigma_2}\right],\\
    m_8^{i,j}={}&-\left[\mu_1^{\sigma_4}\lambda_1^{\sigma_4}+\mu_2^{\sigma_4}\lambda_2^{\sigma_4}+c_2^{\sigma_4}\mu_2^{\sigma_4}\nu_2^{\sigma_4}+\psi_1^{\sigma_3}\theta^{\sigma_3}\mu_1^{\sigma_3}\nu_1^{\sigma_3}+(1-\psi_1^{\sigma_1})\theta^{\sigma_1}\mu_1^{\sigma_1}\nu_1^{\sigma_1}\right].
    \end{aligned}
\end{equation}
Similarly as for (\textbf{A0}), the conditions (\textbf{A1}) are then valid for any choice of $c_1$ and $c_2$, since $0<\mu_1,\mu_2,c_1,c_2<1$ and $\nu_1,\nu_2>0$.

Let us now consider (\textbf{A2}), that is, $m_{1}^{i,j}+m_2^{i,j}+m_6^{i,j}>0$
which translates using \eqref{eq_F_k_sigma_i_theta} into 
\[
\begin{gathered}
    \underbrace{(\mu_1^{\sigma_2}\lambda_1^{\sigma_2}+\mu_2^{\sigma_2}\lambda_2^{\sigma_2})+(\mu_1^{\sigma_4}\lambda_1^{\sigma_4}+\mu_2^{\sigma_4}\lambda_2^{\sigma_4})+\theta^{\sigma_1}\mu_1^{\sigma_1}\nu_1^{\sigma_1}+\theta^{\sigma_3}\mu_1^{\sigma_3}\nu_1^{\sigma_3}}_{A}\\-\underbrace{(c_2^{\sigma_1}\mu_2^{\sigma_1}\nu_2^{\sigma_1}+c_2^{\sigma_3}\mu_2^{\sigma_3}\nu_2^{\sigma_3})}_{B}>0.
\end{gathered}
\]
Since $(\mu_1,\mu_2)$ are the coefficients of a convex combination, and $\theta^{\sigma_p}\geq 0$, we derive the following inequalities over the quantities $A$ and $B$
\[
\begin{gathered}
    A\geq \min_{i,j}(\lambda_1^{\sigma_2},\lambda_2^{\sigma_2})+\min_{i,j}(\lambda_1^{\sigma_4},\lambda_2^{\sigma_4}),\\
    B\leq c_2^{\sigma_1}\nu_2^{\sigma_1}+c_2^{\sigma_3}\nu_2^{\sigma_3}\leq(c_2^{\sigma_1}+c_2^{\sigma_3})\max_{i,j}(\nu_2^{\sigma_1},\nu_2^{\sigma_3}),
\end{gathered}
\]
and we obtain a first condition on $(c_2^{\sigma_1},c_2^{\sigma_3})$, that is on $(c_2^{\sigma_1},c_1^{\sigma_1})$ using \eqref{eq_c1_c2}:
\begin{equation}\label{eq:cond.m1}
    c_2^{\sigma_3}+c_2^{\sigma_1}=c_1^{\sigma_1}+c_2^{\sigma_1}<\frac{\min_{i,j}(\lambda_1^{\sigma_2},\lambda_2^{\sigma_2)}+\min_{i,j}(\lambda_1^{\sigma_4},\lambda_2^{\sigma_4})}{{\max_{i,j}}(\nu_2^{\sigma_1},\nu_2^{\sigma_3})}.
\end{equation}
Let us consider (\textbf{A3}):
\[
\begin{gathered}
m_{2}^{i, j} m_{4}^{i, j-1}-m_{3}^{i, j-1} m_{1}^{i, j}>0\,,\quad
m_{6}^{i, j} m_{4}^{i, j-1}-m_{5}^{i, j-1} m_{1}^{i, j}>0\,,\\
m_{6}^{i, j} m_{8}^{i, j+1}-m_{7}^{i, j+1} m_{1}^{i, j}>0\,,\quad
m_{2}^{i, j} m_{8}^{i, j+1}-m_{9}^{i, j+1} m_{1}^{i, j}>0.
\end{gathered}
\]
The expressions of $m_3,m_5,m_7,m_9$ depend on the location of $M^{\sigma_p}$ and $N^{\sigma_p}$ which is defined by the sign of $D_{xy}$ on $K$ and $L^{\sigma_p}$. To take that into account, we define the following parameter:
\begin{equation}
\psi_{2}^{\sigma_p}=
\begin{cases}
1 \text{ if }D_{xy}(L^{\sigma_p})\geq 0,\\
0 \text{ if }D_{xy}(L^{\sigma_p})<0,
\end{cases}
\end{equation}
and from \eqref{eq_F_k_sigma_i_theta} we deduce the expressions of $m_3^{i,j},m_5^{i,j},m_7^{i,j},m_9^{i,j}$:
\[
\begin{aligned}
m_3^{i,j}={}&(1-\psi_2^{\sigma_1}) c_2^{\sigma_1}\mu_2^{\sigma_1}\nu_{2}^{\sigma_1}+(1-\psi_2^{\sigma_2})c_2^{\sigma_2}\mu_2^{\sigma_2}\nu_{2}^{\sigma_2},\\
m_5^{i,j}={}&\psi_2^{\sigma_2}c_2^{\sigma_2}\mu_2^{\sigma_2}\nu_{2}^{\sigma_2}+\psi_2^{\sigma_3}c_2^{\sigma_3}\mu_2^{\sigma_3}\nu_{2}^{\sigma_3},\\
m_7^{i,j}={}&(1-\psi_2^{\sigma_3})c_2^{\sigma_3}\mu_2^{\sigma_3}\nu_{2}^{\sigma_3}+(1-\psi_2^{\sigma_4})c_2^{\sigma_4}\mu_2^{\sigma_4}\nu_{2}^{\sigma_4},\\
m_9^{i,j}={}&\psi_2^{\sigma_1} c_2^{\sigma_1}\mu_2^{\sigma_1}\nu_{2}^{\sigma_1}+c_2^{\sigma_4}\mu_2^{\sigma_4}\nu_{2}^{\sigma_4}.
\end{aligned}
\]
Since $(\mu_1,\mu_2)$ are the coefficients of a convex combination, and $\psi_2^{\sigma_p},\theta^{\sigma_p}\geq 0$, these relations and \eqref{formula:m1}, \eqref{eq:formulas.meven} yield the following inequalities:
\begin{equation}
\begin{gathered}
m_{2}^{i, j} m_{4}^{i, j-1}\geq \min_{i,j}(\lambda_1^{\sigma_1},\lambda_2^{\sigma_1})\min_{i, j-1}(\lambda_1^{\sigma_2},\lambda_2^{\sigma_2}),\\
m_{6}^{i, j} m_{4}^{i, j-1}\geq \min_{i,j}(\lambda_1^{\sigma_3},\lambda_2^{\sigma_3})\min_{i, j-1}(\lambda_1^{\sigma_2},\lambda_2^{\sigma_2}),\\
m_{6}^{i, j} m_{8}^{i, j+1}\geq \min_{i,j}(\lambda_1^{\sigma_3},\lambda_2^{\sigma_3})\min_{i, j+1}(\lambda_1^{\sigma_4},\lambda_2^{\sigma_4}),\\
m_{2}^{i, j} m_{8}^{i, j+1}\geq \min_{i,j}(\lambda_1^{\sigma_1},\lambda_2^{\sigma_1})\min_{i, j+1}(\lambda_1^{\sigma_4},\lambda_2^{\sigma_4}),\\
m_1^{i,j}\leq\underbrace{\max_{i,j}(\lambda_1^{\sigma_1},\lambda_2^{\sigma_1})+\max_{i,j}(\lambda_1^{\sigma_2},\lambda_2^{\sigma_2})+\max_{i,j}(\lambda_1^{\sigma_3},\lambda_2^{\sigma_3})+\max_{i,j}(\lambda_1^{\sigma_4},\lambda_2^{\sigma_4})+2(\nu_1^{\sigma_1}+\nu_1^{\sigma_2}+\nu_1^{\sigma_3}+\nu_1^{\sigma_4})}_{A'},\\
m_3^{i,j-1}\leq (c_2^{\sigma_1}+c_2^{\sigma_2})\max_{i,j-1}(\nu_{2}^{\sigma_1},\nu_{2}^{\sigma_2}),\\
m_5^{i,j-1}\leq(c_{2}^{\sigma_2}+c_{2}^{\sigma_3})\max_{i,j-1}(\nu_{2}^{\sigma_2},\nu_{2}^{\sigma_3}),\\
m_7^{i,j+1}\leq (c_2^{\sigma_3}+c_2^{\sigma_4})\max_{i,j+1}(\nu_{2}^{\sigma_3},\nu_{2}^{\sigma_4}),\\
m_9^{i,j+1}\leq(c_{2}^{\sigma_1}+c_{2}^{\sigma_4})\max_{i,j+1}(\nu_{2}^{\sigma_1},\nu_{2}^{\sigma_4}).
\end{gathered}
\end{equation}
This leads to conditions on $c_2^{\sigma_1}$, $c_2^{\sigma_2}$, $c_2^{\sigma_3}$ and $c_2^{\sigma_4}$:
\[
\begin{gathered}
    c_2^{\sigma_1}+c_2^{\sigma_2}<\frac{\min_{i,j}(\lambda_1^{\sigma_1},\lambda_2^{\sigma_1})\min_{i, j-1}(\lambda_1^{\sigma_2},\lambda_2^{\sigma_2})}{\max_{i,j-1}(\nu_{2}^{\sigma_1},\nu_{2}^{\sigma_2})A'},\\
    c_2^{\sigma_2}+c_2^{\sigma_3}=c_2^{\sigma_2}+c_1^{\sigma_1}<\frac{\min_{i,j}(\lambda_1^{\sigma_3},\lambda_2^{\sigma_3})\min_{i, j-1}(\lambda_1^{\sigma_2},\lambda_2^{\sigma_2})}{\max_{i,j-1}(\nu_{2}^{\sigma_2},\nu_{2}^{\sigma_3})A'},\\
    c_2^{\sigma_3}+c_2^{\sigma_4}=c_1^{\sigma_1}+c_1^{\sigma_2}<\frac{\min_{i,j}(\lambda_1^{\sigma_3},\lambda_2^{\sigma_3})\min_{i, j+1}(\lambda_1^{\sigma_4},\lambda_2^{\sigma_4})}{\max_{i,j+1}(\nu_{2}^{\sigma_3},\nu_{2}^{\sigma_4})A'},\\
    c_2^{\sigma_1}+c_2^{\sigma_4}=c_2^{\sigma_1}+c_1^{\sigma_2}<\frac{\min_{i,j}(\lambda_1^{\sigma_1},\lambda_2^{\sigma_1})\min_{i, j+1}(\lambda_1^{\sigma_4},\lambda_2^{\sigma_4})}{\max_{i,j+1}(\nu_{2}^{\sigma_1},\nu_{2}^{\sigma_4})A'}.
\end{gathered}
\]
Combined with \eqref{eq:cond.m1}, this leads to the five following inequalities to be ensured by $c_1^{\sigma_1},c_2^{\sigma_1},c_1^{\sigma_2},c_2^{\sigma_2}$
\begin{equation}\label{eq:conditions.ci}
\begin{gathered}
c_1^{\sigma_1}+c_2^{\sigma_1}<\frac{\min_{i,j}(\lambda_1^{\sigma_2},\lambda_2^{\sigma_2})+\min_{i,j}(\lambda_1^{\sigma_4},\lambda_2^{\sigma_4})}{\max_{i,j}(\nu_2^{\sigma_1},\nu_2^{\sigma_3})},\\
c_2^{\sigma_1}+c_2^{\sigma_2}<\frac{\min_{i,j}(\lambda_1^{\sigma_1},\lambda_2^{\sigma_1})\min_{i, j-1}(\lambda_1^{\sigma_2},\lambda_2^{\sigma_2})}{\max_{i,j-1}(\nu_{2}^{\sigma_1},\nu_{2}^{\sigma_2})A'},\\
c_2^{\sigma_2}+c_1^{\sigma_1}<\frac{\min_{i,j}(\lambda_1^{\sigma_3},\lambda_2^{\sigma_3})\min_{i, j-1}(\lambda_1^{\sigma_2},\lambda_2^{\sigma_2})}{\max_{i,j-1}(\nu_{2}^{\sigma_2},\nu_{2}^{\sigma_3})A'},\\
c_1^{\sigma_1}+c_1^{\sigma_2}<\frac{\min_{i,j}(\lambda_1^{\sigma_3},\lambda_2^{\sigma_3})\min_{i, j+1}(\lambda_1^{\sigma_4},\lambda_2^{\sigma_4})}{\max_{i,j+1}(\nu_{2}^{\sigma_3},\nu_{2}^{\sigma_4})A'},\\
c_2^{\sigma_1}+c_1^{\sigma_2}<\frac{\min_{i,j}(\lambda_1^{\sigma_1},\lambda_2^{\sigma_1})\min_{i, j+1}(\lambda_1^{\sigma_4},\lambda_2^{\sigma_4})}{\max_{i,j+1}(\nu_{2}^{\sigma_1},\nu_{2}^{\sigma_4})A'}.
\end{gathered}
\end{equation}

\bibliographystyle{elsarticle-num}
\bibliography{biblio}

\end{document}